\let\phi\varphi
\newtheorem{definition}{Definition}[section]
\newtheorem{cor}{Corollary}[section]
\newtheorem{lemma}{Lemma}[section]
\newtheorem{remark}{Remark}[section]
\newtheorem{theorem}{Theorem}[section]
\newtheorem{proposition}{Proposition}[section]
\newtheorem{condition}{Condition}[section]
\title{On the nonlocal stabilization by starting control
of the normal equation generated from Helmholtz system\thanks{The research of the first author was supported by the Ministry of Education
and Science of the Russian Federation (grant 14.Z50.31.0037). The
second author was supported by RFBR grants 15-01-03576 and
15-01-08023. This paper has been accepted for publication in SCIENCE CHINA Mathematics.}} %Use the shortened version of the full title
\author{A.\, V.\, Fursikov\\ \footnotesize{Department of Mechanics \& Mathematics, Moscow State University,} \\ \footnotesize{Moscow, 119991
Russia;}\\ \footnotesize{Voronezh State University, Voronezh, 394018,  Russia,}
\and L.S. Osipova \\\footnotesize{Department of Mechanics \& Mathematics, Moscow State University,} \\ \footnotesize{Moscow, 119991
Russia}}
\date{}
\begin{document}

\maketitle

\begin{abstract}
We consider the problem of stabilization to zero of semilinear
normal parabolic equations connected with the 3D Helmholtz system
with periodic boundary conditions and arbitrary initial datum.
This problem was previously studied in \cite{FSh16}. As it was
recently revealed, the control function suggested in that work
contains a term impeding transference the stabilization
construction on the 3D Helmholtz system. The main concern of this
article is to prove that this term is not necessary for the
stabilization result, and therefore the control function can be
changed by a proper way.
\end{abstract}

{\bf{Keywords:}} {Equations of normal type,  stabilization by starting control} 

\section{Introduction}
This work is connected with construction of nonlocal stabilization
of solutions for equations of hydrodynamic type by feedback control
\footnote{the term "nonlocal" means that the
distance between steady-state solution and initial condition of
stabilized solution can be of arbitrary magnitude}.
There exists extensive literature on the local stabilization of
Navier-Stokes system in a small neighborhood of a stationary point
(see for example, \cite{BLT}, \cite{B}, \cite{F6}, \cite{FG}, \cite{R1}, \cite{RTh}
as well as literature listed in the review \cite{FK})  but
construction of its  nonlocal analog is in the initial stage yet.
Note that for some equations of fluid dynamics there
are certain nonlocal stabilization results: for Burgers equation,
where exact formula of its solution was used (see \cite{K}), and
for Euler equations (see \cite{C1},\cite{C2}), where the
construction is based on such properties of its solutions which
Navier-Stokes system does not possess. We have to note also that
nonlocal exact controllability of the Navier-Stokes system by
distributed control supported in a sub domain of the spatial
domain where this system is defined  has been proved in \cite{CF}
for 2D case and in \cite{FI} for 3D case. Since settings of exact
controllability and stabilization problems are related in some
sense, this gives us the hope that nonlocal stabilization problem
can be solved, because settings of exact controllability and stabilization
problems are related in some sense.

Let discuss the setting of nonlocal stabilization problem near zero for
3D Navier-Stokes system with periodic boundary conditions written in abstract form:
\begin{equation}\label{NS_stab1}
      \partial v(t,x)+NS(v)(t,x)=\sum_{j=1}^N\delta (t-t_j)u_j(x),\quad  v(t,x)|_{t=0}=v_0(x).
\end{equation}
 Here $v(t,x)$ is its (unknown) solution, $v_0\in V^1(\mathbb{T}^3)$\footnote{definition of the space $V^1$ see below at \eqref{phase_space}}  is a given initial datum,
 $\sum_{j=1}^N\delta (t-t_j)u_j(x)$ is unknown impulse feedback control where $\delta(t-t_j)$
  is Dirac $\delta$ function at $t_j$ and  $t_j, u_j$ are defined with
 $v(t_j,\cdot)$. We assume that $u_j(x)\in V^1(\mathbb{T}^3)$ and $\mbox{supp}u_j\subset \mathcal D$ where a given  sub domain $\mathcal D \subset \mathbb{T}^3 $ does not depend on $j$.

Formulation of stabilization problem for \eqref{NS_stab1} is as follows:

Given $v_0$ find control $\sum_{j=1}^N\delta (t-t_j)u_j(x)$ such that
\begin{equation}\label{NS_stab2}
    \| v(t,\cdot)\|_1\le c\mathrm{e}^{-t},\quad \mbox{as}\;\; t\to \infty\quad \mbox{with}\;\; c=c(\|v_0\|_1)
\end{equation}
where $\| \cdot \|_1$ is the norm of the space $V^1(\mathbb{T}^3)$.

It is very important that here we have to look for solution $v$ of problem \eqref{NS_stab1},\eqref{NS_stab2}
 in the class of smooth enough functions where uniqueness theorem for 3D Navier-Stokes equations has been proved, because stabilization problem can be considered for dynamical systems only. Recall that millennium problem for 3D Navier-Stokes equations is to prove just in such function class the existence of solution
 for boundary value problem connected with 3D Navier-Stokes system, and this problem is not solved yet.
 Thus, there is some connection between millennium problem and problem \eqref{NS_stab1},\eqref{NS_stab2},
 and progress in solution of the last one can be useful to understand the difficulties connected with the first problem better.

 For solution of problem \eqref{NS_stab1},\eqref{NS_stab2} it is more convenient to go over
 Navier-Stokes equation \eqref{NS_stab1} for fluid velocity $v$ to Helmholtz equation for $\mbox{curl}\,v$:
 this allows to go over phase space $V^1$ to the more convenient phase space $V^0$. Further, to solve local stabilization problem all authors begin with studying its main linear part. In the case of non local stabilization we also do the same. But if in local case the main part  is linearization of stabilization problem, in nonlocal case this is so-called nonlocal stabilization problem by starting control for normal parabolic equation (NPE) generated from three-dimensional Helmholtz system.
 \footnote{Let us explain why here we use starting control for stabilization. It is established in local stabilization theory (see, for instance \cite{F6},\cite{FG}, \cite{FK} and references therein) that one can construct impulse and distributed (i.e. realized by external forces) controls by means of some set of starting controls. Moreover, when equations of hydrodynamics type are defined on a domain $G$, but not on a torus, one can construct the control defined on the boundary $\partial G$ with help of a set of starting ones. Recall that usually a boundary control is the most natural from physical point of view. We believe that in nonlocal stabilization theory situation will be similar, and therefore the approach of stabilization by starting control proposed here will be the first step in construction of general control theory.}

Theory of such problems was
first constructed for NPE associated with the differentiated Burgers equation (see \cite{F5},
\cite{FSh}), and after that for NPE associated with Helmholtz system in
\cite{FSh16}, \cite{F7}. The purpose of this article is the further development of this theory.

The non-local stabilization problem of NPE by starting control is
formulated as follows:

Given fixed $0<a_j<b_j<2\pi , j=1,2,3$, and divergence-free
initial condition $y_0(x)$ of NPE associated with the 3D Helmholtz
system with periodic boundary conditions \footnote{In other words independent variables $x=(x_1,x_2,x_3)$ run through $3D$
torus $\mathbb{T}^3=(\mathbb{R}/2\pi \mathbb{Z})^3$}, find a
divergence free control $u_0(x)$, supported on
$[a_1,b_1]\times[a_2,b_2]\times[a_3,b_3]\subset \mathbb{T}^3$,
such that the solution $y(t,x)$ of NPE with initial condition
$y_0+u_0$ satisfies the inequality
\begin{equation}
\|y(t,\cdot)\|_{L_2(\mathbb{T}^3)}\leq\alpha\|y_0+u_0\|_{L_2(\mathbb{T}^3)}\mathrm{e}^{-t},
\quad \forall t>0
\end{equation}
with some $\alpha>1$.

This problem was solved in \cite{FSh16}, \cite{F7}. Namely, it was proved, that the NPE with arbitrary
initial condition $y_0$ can be stabilized by starting control in the form
\begin{equation}\label{introduction_control}
u_0(x)=Fy_0-\lambda u(x),
\end{equation}
where $Fy_0$ is a certain feedback control with feedback operator
$F$, constructed by some technic of local stabilization theory
(see, for example, \cite{F6}, \cite{FG}), $\lambda>0$ is a
constant depending on $y_0$, and $u$ is a universal function,
depending only on the given parallelepiped
$[a_1,b_1]\times[a_2,b_2]\times[a_3,b_3]\subset \mathbb{T}^3$,
which contains the support of $u$ as well as support of the whole
control $u_0$. The proof of the stabilisation result is bazed on
the following estimate:
\begin{equation}\label{main_est_b0}
\int_{T^3}((\mathbf{S}(t,{x};{u}),\nabla)\operatorname{curl}^{-1}\mathbf{S}(t,{x};{u}),\mathbf{S}(t,{x};{u}))dx>3\beta
\mathrm{e}^{-18t} \qquad \forall \; t\ge 0,
\end{equation}
where $S(t,x;u)$ is the solution of the Stokes equation with
initial condition $u$, and $\beta>0$ is some constant.

The proof of estimate \eqref{main_est_b0} is very complicated and
was made in \cite{F5}, \cite{FSh}, \cite{FSh16}.

Our future goal is to develop nonlocal stabilization theory such
that it can be applied for the 3D Helmholtz system. The first
attempts to realize this plan were immediately shown that the term
$Fy_0$ from control function \eqref{introduction_control}, used in
\cite{FSh16}, \cite{F7}, does not allow to transfer stabilization
construction on the 3D Helmholtz system. The aim of this work is
to prove nonlocal stabilization of NPE by starting control that
does not contain the term $Fy_0$.

In section 2 we remind the definitions and some facts concerning
NPE associated with 3D Helmholtz system. Section 3 is devoted to
the proof of the main stabilization result for NPE, with help of
starting control \eqref{introduction_control} with omitted term
$Fy_0$.

%%%%%%%%%%%%%%%%%%%%%%%%%%%%%%%%%%%%%%%%%%%%%%%%%%%%%%%%%%%%%%%%%%%%%%%%%%%%%%%%%%%%%%%%%%%%%%%%%%%%%%%%%%%%%%%%%%%%%%%
%%%%%%%%%%%%%%%%%%%%%%%%%%%%%%%%%%%%%%%%%%%%%%%%%%%%%%%%%%%%%%%%%%%%%%%%%%%%%%%%%%%%%%%%%%%%%%%%%%%%%%%%%%%%%%%%%%%%%%%

\section{Semilinear parabolic equation of normal type}\label{s1}
In this section we recall the definition and basic properties of
normal parabolic equations corresponding to 3D Helmholtz system
with periodic boundary conditions: the explisit formula for their
solution, the theorem on the existence and uniqueness of solution
for normal parabolic equations and the structure of their
dynamics. These results have been obtained in \cite{F2}-\cite{F4}.
We begin with formulation of Navier-Stokes equations that are
basic in the theory of viscous incompressible fluid.

%%%%%%%%%%%%%%%%%%%%%%%%%%%%%%%%%%%%%%%%%%%%%%%%%%%%%%%%%%%

\subsection{Navier-Stokes equations}\label{s1.1}

Let us consider 3D Navier-Stokes system
\begin{equation}\label{NavierStokes}
      \partial_t {v}(t,{x})- \Delta {v}(t,{x})+({v},\nabla){v}+\nabla{p}(t,{x})=0, \, \operatorname{div} {v}=0,
\end{equation}
with periodic boundary conditions
%where $\partial_tv=\partial v/\partial t,\; \partial_{xx}v=\partial^2v/\partial x^2$ with periodic boundary condition
\begin{equation}\label{NavierStokes_boundcond}
   {v}(t,...,x_i,...)={v}(t,...,x_i+2\pi,...), \, i=1,2,3
\end{equation}
and initial condition
\begin{equation}\label{NavierStokes_incond}
  {v}(t,{x})|_{t=0}={v}_0({x})
\end{equation}
where $t\in \mathbb{R}_{+}$, ${x}=(x_1,x_2,x_3)\in\mathbb{R}^3$, ${v}(t,{x})=(v_1,v_2,v_3)$ is the velocity vector field of fluid flow, $\nabla p$ is the gradient of pressure, $\Delta$ is the Laplace operator, ${({v},\nabla){v}=\sum_{j=1}^{3}v_j\partial_{x_j}v}$. Periodic boundary conditions \eqref{NavierStokes_boundcond} mean that Navier-Stokes eqautions \eqref{NavierStokes} and initial conditions \eqref{NavierStokes_incond} are defined on torus $\mathbb{T}^3=(\mathbb{R}/2\pi\mathbb{Z})^3$.

For each $m\in \mathbb{Z}_+=\{j\in \mathbb{Z}:j\geq 0\}$ we define the space
\begin{equation}\label{phase_space}
V^m=V^m(\mathbb{T}^3)=\{v(x)\in(H^m(\mathbb{T}^3))^3:\mbox{div}v=0,\int_{\mathbb{T}^3}v(x)dx=0\}
\end{equation}
where $H^m(\mathbb{T}^3)$ is the Sobolev space of functions belonging to $L_2(\mathbb{T}^3)$ together with their derivatives up to the order $m$.

It is well-known, that the non-linear term $(v,\nabla)v$ in problem \eqref{NavierStokes}-\eqref{NavierStokes_incond} satisfies relation
$$\int_{\mathbb{T}^3}(v(t,x),\nabla)v(t,x)\cdot v(t,x)dx=0.$$

Therefore, multiplying \eqref{NavierStokes} scalarly by $v$ in $L_2(\mathbb{T}^3)$, integrating by parts by $x$,
and then integrating by $t$, we obtain the well-known energy estimate
\begin{equation}\label{en_est}
\int_{\mathbb{T}^3}|v(t,x)|^2dx+2\int_{0}^t\int_{\mathbb{T}^3}|\nabla_x v(\tau, x)|^2dx d\tau\leq\int_{\mathbb{T}^3}|v_0(x)|^2dx,
\end{equation}
which allows to prove the existence of weak solution for
\eqref{NavierStokes}-\eqref{NavierStokes_incond}. But, as is
well-known, scalar multiplication of \eqref{NavierStokes} by $v$
in $V^1(\mathbb{T}^3)$ does not result into an analog of estimate
\eqref{en_est}. Nevertheless, expression of such kind will be
important for us when they will be written in equivalent form with
help of solutions of Helmholtz system.

\subsection{Helmholtz equations}
Using problem
\eqref{NavierStokes}-\eqref{NavierStokes_incond} for fluid
velocity $v$, let us derive the similar problem for the curl of velocity
\begin{equation}\label{curl}
{\omega}(t,x)=\operatorname{curl}v(t,x) = (\partial_{x_2}v_3 - \partial_{x_3}v_2,\partial_{x_3}v_1 - \partial_{x_1}v_3,\partial_{x_1}v_2 - \partial_{x_2}v_1)
\end{equation}

It is well-known from vector analysis, that
\begin{equation}\label{vectan1}
(v,\nabla)v = \omega\times v+\nabla\frac{|v|^2}{2},
\end{equation}
\begin{equation}\label{vectan2}
\operatorname{curl}(\omega\times v)=(v,\nabla)\omega-(\omega,\nabla)v, \text{ if } \operatorname{div}v=0,\, \operatorname{div}\omega=0.
\end{equation}
where $\omega\times v = (\omega_2v_3 - \omega_3 v_2,\omega_3v_1 -
\omega_1v_3,\omega_1v_2 - \omega_2v_1)$ is the vector product of
$\omega$ and $v$, and $|v|^2=v_1^2+v_2^2+v_3^2$. Substituting
\eqref{vectan1} into \eqref{NavierStokes} and applying curl
operator to both sides of the obtained equation, taking into
account \eqref{curl}, \eqref{vectan2} and formula
$\operatorname{curl}\nabla F=0$, we obtain the Helmholtz equations
\begin{equation}\label{Helmholtz}
\partial_t\omega(t,x)-\Delta \omega+(v,\nabla)\omega-(\omega,\nabla)v=0
\end{equation}
with initial conditions
\begin{equation}\label{Helmholtz_incond}
\omega(t,x)|_{t=0}=\omega_0(x):=\operatorname{curl}v_0(x),
\end{equation}
and periodic boundary conditions.

\subsection{Derivation of Normal Parabolic Equations (NPE)}\label{s1.2}
Using decomposition into Fourier series
\begin{equation}\label{Fourier_decomp}
v(x)=\sum_{k\in\mathbb{Z}^3}\hat{v}(k)\mathrm{e}^{\mathrm{i}(k,x)}, \, \, \hat{v}(k)=(2\pi)^{-3}\int_{\mathbb{T}^3}v(x)\mathrm{e}^{-\mathrm{i}(k,x)}dx,
\end{equation}
where $(k,x)=k_1\cdot x_1+k_2\cdot x_2+k_3\cdot x_3$,
$k=(k_1,k_2,k_3)$, and the well-known formula
$\mbox{curl}\,\mbox{curl}\,v = -\Delta v$, if $\mbox{div }\,v =0$,
we see that inverse operator to curl is well-defined on space
$V^m$ and is given by the formula
\begin{equation}\label{rotinv}
\mbox{curl}^{-1}\omega(x)=\mathrm{i}\sum_{k\in\mathbb{Z}^3}\frac{k\times\hat{\omega}(k)}{|k|^2}\mathrm{e}^{\mathrm{i}(k,x)}.
\end{equation}
Using formulas $\mbox{curl}v=\omega ,\, \mbox{div}v=0$ one can get
by straightforward calculations that $\| \omega
\|^2_{L_2(\mathbb{T}^3)}=\| \nabla v\|^2_{L_2(\mathbb{T}^3)}$.
 Therefore, operator $\mbox{curl}:V^1\mapsto V^0$ realizes
isomorphism of the spaces, and it is a unitary operator. Thus, a
sphere in $V^1$ for
 \eqref{NavierStokes}-\eqref{NavierStokes_incond} is equivalent to a sphere in $V^0$
 for the problem \eqref{Helmholtz}-\eqref{Helmholtz_incond}.

Let us denote the non-linear term in Helmholtz system by $B$:
\begin{equation}\label{B_term}
B(\omega)=(v,\nabla)\omega-(\omega,\nabla)v,
\end{equation}
where $v$ can be expressed in terms of $\omega$ using
\eqref{rotinv}.

Multiplying \eqref{B_term} scalarly by $\omega=(\omega_1, \omega_2, \omega_3)$ and integrating by parts, we get expression
\begin{equation}\label{B_term_mult}
(B(\omega),\omega)_{{V^0}}=-\int_{\mathbb{T}^3}\sum_{j,k=1}^{3}\omega_j\partial_jv_k\omega_kdx,
\end{equation}
that, generally speaking, is not zero. Hence, energy estimate for
solutions of 3-D Helmholtz system is not fulfilled. In other
words, operator $B$ allows decomposition
\begin{equation}\label{B_term_decomp}
B(\omega) = B_n(\omega)+B_{\tau}(\omega),
\end{equation}
where vector $B_n(\omega)$ is orthogonal to sphere
$\Sigma(\|\omega\|_{V^0})=\{u\in V^0:\|u\|_{V^0}=\|\omega\|_{V^0}
\}$ at the point $\omega$, and vector $B_{\tau}$ is tangent to
$\Sigma(\|\omega\|_{V^0})$ at $\omega$. In general, both terms in
\eqref{B_term_decomp} are not equal to zero. Since the presence of
$B_n$, and not of $B_{\tau}$, prevents the fulfillments of the
energy estimate, it is plausible that the term $B_n$ by itself
generates the possible singularities in the solution. Therefore,
there is reason to omit the $B_{\tau}$ term in Helmholtz system
and study first the equations \eqref{Helmholtz} with non-linear
operator $B(\omega)$ replaced with $B_n(\omega)$. \footnote{I.e.
there is reason to begin investigation of stabilization problem
for Helmholtz system with studies of stabilization problem for
indicated equations.} The obtained equations will be called the
normal parabolic equations (NPE).

Let us derive the NPE corresponding to
\eqref{Helmholtz}-\eqref{Helmholtz_incond}.

Since summand $(v,\nabla)\omega$ in \eqref{B_term} is  tangential
to vector $\omega$, the normal part of operator $B$ is defined by
the summand $(\omega,\nabla)v$. We shall seek it in the form
$\Phi(\omega)\omega$, where $\Phi$ is the unknown functional,
which can be found from equation
\begin{equation}\label{Phi_eq}
\int_{\mathbb{T}^3}\Phi(\omega)\omega(x)\cdot\omega(x)dx=\int_{\mathbb{T}^3}(\omega(x),\nabla)v(x)\cdot \omega(x)dx.
\end{equation}
According to \eqref{Phi_eq},
\begin{equation}\label{Phi_def}
\Phi(\omega) = \left\{\begin{array}{rl}
\int_{\mathbb T^3}(\omega(x),\nabla)\operatorname{curl}^{-1}\omega(x)\cdot \omega(x)dx/\int_{\mathbb T^3}|\omega(x)|^2dx,
 & \omega\neq 0,\\
        0, & \omega \equiv 0.
            \end{array}\right.
\end{equation}
where $\operatorname{curl}^{-1}\omega(x)$ is defined in \eqref{rotinv}.

Thus, we arrive at the following system of normal parabolic equations corresponding to Helmholtz equations
\eqref{Helmholtz}:
\begin{equation}\label{npe}
\partial_t \omega(t,x) - \Delta \omega - \Phi(\omega)\omega = 0,\, \operatorname{div}\omega = 0,
\end{equation}
\begin{equation}\label{NPE_boundcond}
   {\omega}(t,...,x_i,...)={\omega}(t,...,x_i+2\pi,...), \, i=1,2,3
\end{equation}
where $\Phi$ is the functional defined in \eqref{Phi_def}

Further we study problem \eqref{npe}, \eqref{NPE_boundcond} with
initial condition \eqref{Helmholtz_incond}.

\subsection{Explicit formula for solution of NPE}\label{s2.2}
%%%%%%%%%%%%%%%%%%%%%%%%%%%%%%%

In this subsection we remind the explicit formula for NPE
solution.

\begin{lemma}\label{exp_sol_lem}
Let $\mathbf{S}(t,x;\omega_0)$ be the solution of the following Stokes system with periodic boundary conditions:

\begin{eqnarray}
&\partial_t z - \Delta z=0,\, \operatorname{div}z = 0;\label{3Dheat_eq}\\
& z(t,...,x_i+2\pi,...) = z(t,x),\,\,\,\, i=1,2,3;\label{3DS_boundary_cond}\\
&z(0,x)=\omega_0,\label{3Dheat_in_cond}
\end{eqnarray}
i.e. $\mathbf{S}(t,x;\omega_0)=z(t,x)$. \footnote{Note that
because of periodic boundary conditions the Stokes system should
not contain the pressure term $\nabla p$} Then the solution of
problem \eqref{npe} with periodic boundary conditions and initial
condition \eqref{Helmholtz_incond} has the form
\begin{equation}\label{exp_sol}
   \omega(t,x;\omega_0)=\frac{\mathbf{S}(t,x;\omega_0)}{1-\int_0^t\Phi (\mathbf{S}(\tau,\cdot ;\omega_0))d\tau}
\end{equation}
\end{lemma}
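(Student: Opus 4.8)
The plan is to look for the solution of \eqref{npe}, \eqref{NPE_boundcond}, \eqref{Helmholtz_incond} in the separated form
\begin{equation*}
\omega(t,x)=\alpha(t)\,\mathbf{S}(t,x;\omega_0),
\end{equation*}
where $\alpha(t)$ is a scalar function to be determined, normalized by $\alpha(0)=1$. Any $\omega$ of this shape automatically inherits from $\mathbf{S}$ the constraint $\operatorname{div}\omega=0$ and the periodic boundary conditions \eqref{NPE_boundcond}, and satisfies $\omega(0,x)=\omega_0(x)$; so it only remains to choose $\alpha$ so that the parabolic equation in \eqref{npe} holds. Using that $\mathbf{S}(\cdot,\cdot;\omega_0)$ solves the Stokes system \eqref{3Dheat_eq}, i.e. $\partial_t\mathbf{S}=\Delta\mathbf{S}$, one gets at once
\begin{equation*}
\partial_t\omega-\Delta\omega=\alpha'(t)\,\mathbf{S}(t,x;\omega_0),
\end{equation*}
so the parabolic part of \eqref{npe} collapses to a single term.

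The one structural fact needed about the nonlinearity is that the functional $\Phi$ from \eqref{Phi_def} is homogeneous of degree one: $\Phi(c\,\omega)=c\,\Phi(\omega)$ for every scalar $c$, since $\operatorname{curl}^{-1}$ is linear, so the numerator in \eqref{Phi_def} is cubic and the denominator quadratic in $\omega$. Consequently $\Phi(\omega)\,\omega=\Phi\bigl(\alpha(t)\mathbf{S}\bigr)\,\alpha(t)\mathbf{S}=\alpha(t)^2\,\Phi\bigl(\mathbf{S}(t,\cdot;\omega_0)\bigr)\,\mathbf{S}(t,x;\omega_0)$, and substituting into \eqref{npe} gives
\begin{equation*}
\Bigl(\alpha'(t)-\alpha(t)^2\,\Phi\bigl(\mathbf{S}(t,\cdot;\omega_0)\bigr)\Bigr)\,\mathbf{S}(t,x;\omega_0)=0 .
\end{equation*}
When $\omega_0\not\equiv0$ the function $\mathbf{S}(t,\cdot;\omega_0)$ is not identically zero for any $t\ge 0$ — in Fourier variables $\widehat{\mathbf{S}}(t,k)=e^{-t|k|^2}\widehat{\omega_0}(k)$ and $e^{-t|k|^2}\neq 0$ — so the scalar bracket must vanish (the case $\omega_0\equiv0$ being trivial). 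Thus $\alpha$ must solve the Bernoulli equation $\alpha'=\alpha^2\,\Phi(\mathbf{S}(t,\cdot;\omega_0))$, $\alpha(0)=1$.

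It remains to integrate this ODE, which I would do by rewriting it as $-\tfrac{d}{dt}\bigl(1/\alpha(t)\bigr)=\Phi(\mathbf{S}(t,\cdot;\omega_0))$; together with $\alpha(0)=1$ this yields $1/\alpha(t)=1-\int_0^t\Phi(\mathbf{S}(\tau,\cdot;\omega_0))\,d\tau$, hence precisely formula \eqref{exp_sol}, valid on the maximal time interval on which the denominator does not vanish (globally, when it never does). Running the computation in reverse shows conversely that \eqref{exp_sol} does solve \eqref{npe}, \eqref{NPE_boundcond}, \eqref{Helmholtz_incond}, and uniqueness in the regularity class in which the quoted existence–uniqueness theorem for \eqref{npe} is stated then forces every solution to be of this form. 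I do not expect a genuine obstacle here: the whole argument rests on the degree-one homogeneity of $\Phi$ and the reduction to a Bernoulli equation, the only point needing a word of care being the non-vanishing of $\mathbf{S}(t,\cdot;\omega_0)$, which legitimizes cancelling the common vector factor.
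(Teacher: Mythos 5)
Your argument is correct: the degree-one homogeneity of $\Phi$, the ansatz $\omega=\alpha(t)\mathbf{S}(t,x;\omega_0)$, and the resulting Bernoulli equation for $\alpha$ give exactly \eqref{exp_sol}, and the cancellation of the common vector factor is properly justified by the non-vanishing of $\widehat{\mathbf{S}}(t,k)=\mathrm{e}^{-t|k|^2}\widehat{\omega}_0(k)$. The paper itself only cites \cite{F2}, \cite{F4} for this lemma, and the proof given there is essentially this same separation-of-variables reduction, so your proposal matches the intended argument.
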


One can see the proof of this Lemma in \cite{F2}, \cite{F4}. %Note
%that in \cite{F2}, \cite{F4} analytical and geometrical structure
%of sets $M_-, M_+, M_g$ are studied. We do not formulate these
%results here because they will not be used below.

\subsection{Properties of the functional $\Phi(u)$}\label{s2.4}

Let us now study some properties of the functional $\Phi(u)$, defined in \eqref{Phi_def}.

For every $s\in\mathbb{R}$ the Sobolev space $H^{s}(\mathbb{T}^3)$ is defined as the space of periodic real distributions with finite norm
\begin{equation}\label{sobolev_space_norm}
\|z\|^2_{H^s(\mathbb{T})^3}\equiv\|z\|^2_s=\sum_{k\in\mathbb{Z}^3}|k|^{2s}|\hat{z}(k)|^2<\infty,
\end{equation}
where $\hat{z}(k)$ are the Fourier coefficients of function $z$. Note, that everywhere below we will be considering only functions $z$ with $\hat{z}(0)=0$.

We shall need the following space:

\begin{equation}\label{V_s_space}
V^s\equiv V^s(\mathbb{T}^3)=
\left\{v(x)\in (H^s(\mathbb{T}^3))^3: \operatorname{div} v(x)=0, \int_{\mathbb{T}^3}v(x)dx=0\right\},
s\in \mathbb{R}.
\end{equation}

\begin{lemma}\label{Phi_prop1_lem}
There exists a constant $c>0$ such that for every $u\in V^{3/2}$ functional $\Phi(u)$, defined in \eqref{Phi_def}, satisfies the following estimate:
\begin{equation}\label{Phi_prop1}
|\Phi(u)|\leq c\|u\|_{3/2}.
\end{equation}
\end{lemma}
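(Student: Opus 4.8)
The plan is to estimate the numerator of $\Phi(u)$ directly using Hölder's inequality and Sobolev embeddings, while noting that the denominator $\int_{\mathbb{T}^3}|u(x)|^2\,dx=\|u\|_0^2$ is controlled below (in the relevant scaling-invariant sense) by the numerator's natural bound, so that the ratio is of order $\|u\|_{3/2}$. Concretely, write the numerator as
\begin{equation}\label{prop_num}
\int_{\mathbb{T}^3}(u(x),\nabla)\operatorname{curl}^{-1}u(x)\cdot u(x)\,dx
=\sum_{j,k=1}^3\int_{\mathbb{T}^3}u_j(x)\,\partial_j(\operatorname{curl}^{-1}u)_k(x)\,u_k(x)\,dx,
\end{equation}
and bound it by $\|u\|_{L_4(\mathbb{T}^3)}^2\,\|\nabla\operatorname{curl}^{-1}u\|_{L_2(\mathbb{T}^3)}$ via Hölder with exponents $4,4,2$. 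First I would observe that $\|\nabla\operatorname{curl}^{-1}u\|_{L_2(\mathbb{T}^3)}=\|\operatorname{curl}^{-1}u\|_1$, and since $\operatorname{curl}:V^1\to V^0$ is a unitary isomorphism (as recalled in the excerpt), this equals $\|u\|_0$. Hence the numerator is bounded by $\|u\|_{L_4}^2\,\|u\|_0$.

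Next I would invoke the Sobolev embedding $H^{3/4}(\mathbb{T}^3)\hookrightarrow L_4(\mathbb{T}^3)$ (valid since $3/4 = 3/2 - 3/4 = 3(1/2-1/4)$ in dimension $3$), giving $\|u\|_{L_4}\le c\|u\|_{3/4}$. Then interpolation between $H^0$ and $H^{3/2}$ yields $\|u\|_{3/4}\le c\|u\|_0^{1/2}\|u\|_{3/2}^{1/2}$, so $\|u\|_{L_4}^2\le c\|u\|_0\|u\|_{3/2}$. Combining, the numerator is at most $c\|u\|_0\|u\|_{3/2}\cdot\|u\|_0 = c\|u\|_0^2\,\|u\|_{3/2}$. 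Dividing by the denominator $\|u\|_0^2$ (for $u\neq 0$; the case $u\equiv 0$ is trivial by definition of $\Phi$) gives exactly $|\Phi(u)|\le c\|u\|_{3/2}$, as claimed.

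The main point to be careful about — the only real obstacle — is the interplay between the $L_2$ norm in the denominator and the cubic structure of the numerator: one must extract precisely two powers of $\|u\|_0$ from the numerator so that they cancel against the denominator, leaving a clean linear dependence on $\|u\|_{3/2}$ with no residual negative power of $\|u\|_0$. This is what forces the particular splitting $\|u\|_{L_4}^2\,\|u\|_0$ (rather than, say, $\|u\|_{L_3}^3$) together with the interpolation step above; any other allocation of the derivative or of the Lebesgue exponents either fails to close or produces the wrong homogeneity. Everything else — Hölder, the unitarity of $\operatorname{curl}$, the Sobolev embedding, and Hilbert-space interpolation on the torus — is standard and can be quoted.
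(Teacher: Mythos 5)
Your proof is correct, and it follows the same overall strategy as the paper (H\"older, a Sobolev embedding, and interpolation between $H^0$ and $H^{3/2}$ to extract exactly two powers of $\|u\|_0$), but with a different allocation of exponents. The paper uses the symmetric H\"older splitting $(3,3,3)$, bounds all three factors via $H^{1/2}(\mathbb{T}^3)\subset L_3(\mathbb{T}^3)$ together with the fact that $\nabla\operatorname{curl}^{-1}$ is an order-zero operator, and then applies the interpolation inequality $\|v\|_{1/2}^3\le\|v\|_0^2\|v\|_{3/2}$. You instead use the splitting $(4,4,2)$, which lets you treat the factor $\|\nabla\operatorname{curl}^{-1}u\|_{L_2}$ exactly, via the unitarity of $\operatorname{curl}\colon V^1\to V^0$, as $\|u\|_0$; the remaining two factors are handled by $H^{3/4}(\mathbb{T}^3)\hookrightarrow L_4(\mathbb{T}^3)$ and $\|u\|_{3/4}\le\|u\|_0^{1/2}\|u\|_{3/2}^{1/2}$. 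Your version is marginally more self-contained, since it needs only the $L_2$ identity for $\nabla\operatorname{curl}^{-1}$ rather than its boundedness in an $L_p$ or $H^{1/2}$ sense; the paper's version is more symmetric and reuses the same $L_3$ trilinear bound that reappears later in estimate \eqref{Psi_est1}.

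One side remark in your last paragraph is inaccurate: you assert that the allocation $\|u\|_{L_3}^3$ ``fails to close or produces the wrong homogeneity,'' but in fact $\|u\|_{L_3}^3\le c\|u\|_{1/2}^3\le c\|u\|_0^2\|u\|_{3/2}$ has precisely the right homogeneity, and this is exactly the computation the paper performs. This does not affect the validity of your argument, only the claim of uniqueness of your exponent choice.
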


\begin{proof}
According to Sobolev's embedding theorem,
$H^{1/2}(\mathbb{T}^3)\subset L_3(\mathbb{T}^3)$, therefore, using
definition \eqref{Phi_def} and interpolation inequality
$\|v\|^3_{1/2}\leq \|v\|_0^2\|v\|_{3/2}$, we get estimate
\eqref{Phi_prop1}:
\begin{equation*}
|\Phi(u)|\leq \frac{\|u\|^2_{L_3(\mathbb{T}^3)}\|\nabla \operatorname{curl}^{-1}u\|_{L_3(\mathbb{T}^3)}}{\|u\|^2_0}\leq
 c\frac{\|u\|^3_{1/2}}{\|u\|^2_0}\leq c\frac{\|u\|^2_0 \|u\|_{3/2}}{\|u\|^2_0}=c\|u\|_{3/2}.
\end{equation*}
\end{proof}

\begin{lemma}\label{Phi_prop2_lem}
For any $\beta<1/2$ there exists a constant $c_1>0$ such that %There exists a constant $c>0$ such that for every $u\in V^{3/2}$ functional $\Phi(u)$, defined in \eqref{Phi_def}, satisfies the following estimate:
\begin{equation}\label{Phi_prop2}
\left|\int_0^t\Phi(\mathbf{S}(\tau;y_0))d\tau\right|\leq c_1\|y_0\|_{-\beta}
\end{equation}
for any $y_0\in V^{-\beta}(\mathbb{T}^3)$ and $t>0$, where $\Phi$
is the functional defined in \eqref{Phi_def} and $S(t;y_0)$ is the
solution operator of problem
\eqref{3Dheat_eq}-\eqref{3Dheat_in_cond}.\footnote{Here and below
we use for brevity notation $S(t;y_0)$ instead of $S(t,\cdot
;y_0)$.}
\end{lemma}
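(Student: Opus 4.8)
The plan is to reduce the time integral of $\Phi$ along the Stokes flow to a convergent integral in Fourier space, exploiting the exponential decay $e^{-|k|^2\tau}$ supplied by the heat semigroup. First I would write, for $y_0 \in V^{-\beta}$ with Fourier coefficients $\hat y_0(k)$, the Stokes solution as $\mathbf{S}(\tau;y_0)(x) = \sum_{k} \hat y_0(k) e^{-|k|^2 \tau} e^{\mathrm{i}(k,x)}$, and insert this into the definition \eqref{Phi_def}. The numerator of $\Phi(\mathbf{S}(\tau;y_0))$ is the trilinear form $\int_{\mathbb{T}^3} (\mathbf{S},\nabla)\operatorname{curl}^{-1}\mathbf{S}\cdot \mathbf{S}\,dx$, which by Parseval becomes a triple sum over $k+\ell+m=0$ of $\hat y_0(k)\hat y_0(\ell)\hat y_0(m)$ times an explicit kernel (involving $k\times\hat\omega$-type factors from \eqref{rotinv}) times $e^{-(|k|^2+|\ell|^2+|m|^2)\tau}$. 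The denominator is $\|\mathbf{S}(\tau;y_0)\|_0^2 = \sum_k |\hat y_0(k)|^2 e^{-2|k|^2\tau}$.

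The key difficulty is that $\Phi$ is a quotient, so I cannot simply integrate term by term; the bound must be uniform in $t$ and cannot blow up when $\mathbf{S}(\tau;y_0)$ is small. The way around this is \emph{not} to estimate $\Phi$ pointwise in $\tau$ and then integrate — that would lose the decay — but to go back one step. Recall from \eqref{Phi_eq}–\eqref{Phi_def} that $\Phi(\omega)\|\omega\|_0^2$ equals the trilinear numerator, and from the explicit solution formula \eqref{exp_sol} one has the identity $\frac{d}{d\tau}\log\bigl(1-\int_0^\tau \Phi(\mathbf{S})\,ds\bigr)^{-1}$-type relations; more directly, I would use that along the Stokes flow $\frac{d}{d\tau}\|\mathbf{S}(\tau;y_0)\|_0^2 = -2\|\nabla \mathbf{S}(\tau;y_0)\|_0^2$, so the denominator is monotone decreasing and controls things from below only trivially. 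The genuinely useful estimate instead bounds the \emph{numerator} directly: $\bigl|\int_0^t \int_{\mathbb{T}^3}(\mathbf{S},\nabla)\operatorname{curl}^{-1}\mathbf{S}\cdot\mathbf{S}\,dx\,d\tau\bigr|$ is dominated, via the Fourier representation above and the crude bound $|k\times\hat\omega(k)|/|k|^2 \le |\hat\omega(k)|/|k|$, by a sum $\sum_{k+\ell+m=0} |k|\,|\hat y_0(k)||\hat y_0(\ell)||\hat y_0(m)| \int_0^\infty e^{-(|k|^2+|\ell|^2+|m|^2)\tau}\,d\tau$, and the $\tau$-integral gives the factor $(|k|^2+|\ell|^2+|m|^2)^{-1} \le |k|^{-2}$. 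But this controls the numerator integrated in time, whereas $\int_0^t\Phi\,d\tau$ has the $\tau$-dependent denominator \emph{inside} the integral — so this cruder route does not immediately close.

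Consequently the realistic plan is the following two-part argument. On a first time interval $[0,t_0]$ where $\|\mathbf{S}(\tau;y_0)\|_0$ stays comparable to $\|y_0\|_{-\beta}$ (up to the smoothing the heat kernel has not yet produced much decay of the low modes), use Lemma 2.3: since $\mathbf{S}(\tau;y_0) \in V^{3/2}$ for $\tau>0$ with $\|\mathbf{S}(\tau;y_0)\|_{3/2} \le C\tau^{-(3/2+\beta)/2}\|y_0\|_{-\beta}$ by the standard heat-semigroup smoothing estimate, we get $|\Phi(\mathbf{S}(\tau;y_0))| \le c\,C\,\tau^{-(3/2+\beta)/2}\|y_0\|_{-\beta}$, which is integrable near $\tau=0$ precisely because $\beta<1/2$ forces the exponent $(3/2+\beta)/2 < 1$; integrating over $[0,t_0]$ yields a contribution bounded by $c_1'\,t_0^{1-(3/2+\beta)/2}\|y_0\|_{-\beta}$. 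For the tail $[t_0,t]$, observe that all Sobolev norms of $\mathbf{S}(\tau;y_0)$ decay (the lowest surviving mode has $|k|\ge 1$, so $\|\mathbf{S}(\tau;y_0)\|_{3/2}$ and $\|\mathbf{S}(\tau;y_0)\|_0$ both carry a factor $e^{-\tau}$, with matching exponential rate), hence $|\Phi(\mathbf{S}(\tau;y_0))|$ is still controlled by $c\|\mathbf{S}(\tau;y_0)\|_{3/2}$ and the denominator never helps but also never hurts because numerator and denominator share the leading $e^{-2\tau}$ — more carefully, $\Phi(\mathbf{S}(\tau;y_0)) = \Phi(\mathbf{S}(\tau-t_0; \mathbf{S}(t_0;y_0)))$ and $\|\mathbf{S}(t_0;y_0)\|_{3/2} \le C t_0^{-(3/2+\beta)/2}\|y_0\|_{-\beta}$, so Lemma 2.3 plus exponential decay of the $3/2$-norm from $\tau=t_0$ onward gives $\int_{t_0}^\infty |\Phi(\mathbf{S}(\tau;y_0))|\,d\tau \le c\int_{t_0}^\infty C't_0^{-(3/2+\beta)/2} e^{-(\tau-t_0)}\|y_0\|_{-\beta}\,d\tau = c_1''\,t_0^{-(3/2+\beta)/2}\|y_0\|_{-\beta}$. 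Choosing $t_0 = 1$ (a universal constant) and adding the two pieces gives $|\int_0^t \Phi(\mathbf{S}(\tau;y_0))\,d\tau| \le c_1\|y_0\|_{-\beta}$ uniformly in $t>0$, as claimed. The main obstacle is verifying that the denominator $\|\mathbf{S}(\tau;y_0)\|_0^2$ in $\Phi$ does not degrade the estimate — handled here by the semigroup property, writing the flow from a positive time $t_0$ where the solution is already smooth, so that on $[t_0,\infty)$ one only ever needs Lemma 2.3 and the uniform exponential decay of Stokes solutions with zero mean, never a lower bound on the denominator.
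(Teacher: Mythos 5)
Your final argument is essentially the paper's own proof: bound $|\Phi(\mathbf{S}(\tau;y_0))|$ pointwise in $\tau$ by $c\|\mathbf{S}(\tau;y_0)\|_{3/2}$ via Lemma \ref{Phi_prop1_lem}, use the heat-semigroup smoothing rate $\tau^{-(3+2\beta)/4}\|y_0\|_{-\beta}$ (integrable at $\tau=0$ exactly when $\beta<1/2$) together with the exponential decay coming from $|k|\ge 1$, and integrate --- the paper merely packages your two time regimes into the single maximization of $x^{3+2\beta}\mathrm{e}^{-(x^2-1)t}$ over $x\ge 1$. Your opening detour about the denominator of $\Phi$ is moot, since Lemma \ref{Phi_prop1_lem} already absorbs it.
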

\begin{proof}
The solution $\mathbf{S}(t, \cdot;y_0)$ of \eqref{3Dheat_eq}-\eqref{3Dheat_in_cond} can be represented as
\begin{equation*}
\mathbf{S}(t,\cdot;y_0)=\sum_{k\neq 0}\hat{y}_0(k)\cdot \mathrm{e}^{\mathrm{i}(k,\cdot)}\cdot\mathrm{e}^{-|k|^2t},
\end{equation*}
where $\hat{y}_0(k)$ are the Fourier coefficients of function $y_0$. Therefore, according to definition \eqref{sobolev_space_norm} and \eqref{Phi_prop1},
\begin{equation}\label{Phi_prop2_1}
{\left|\int_0^t\Phi(\mathbf{S}(\tau,\cdot;y_0))d\tau\right|}\leq c\int_0^t \mathrm{e}^{-\tau/2}\left(\sum_{k\neq 0}(|\hat{y}_0(k)|^2|k|^{-2\beta})|k|^{3+2\beta}\mathrm{e}^{-(k^2-1)\tau}\right)^{1/2}d\tau.
\end{equation}

For every $t>0$ let us consider the following extremal problem:
\begin{equation*}
f_t(x)=x^{3+2\beta}\mathrm{e}^{-(x^2-1)t}\rightarrow \max, \,\, x\geq 1.
\end{equation*}

Its solution is $\hat{x}=\sqrt{\frac{3+2\beta}{2t}}$. So,
\begin{equation}\label{f_t_max}
\max(f_t(x))=\begin{cases}
             \left(\frac{3+2\beta}{2t}\right)^{\frac{3+2\beta}{2}}\mathrm{e}^{-(3+2\beta-2t)/2}, &t\leq\frac{3+2\beta}{2},\\
             1, & t\geq\frac{3+2\beta}{2}.
             \end{cases}
\end{equation}
 Substituting \eqref{f_t_max} into \eqref{Phi_prop2_1}, we arrive at \eqref{Phi_prop2}.
\end{proof}

According to Lemma \ref{Phi_prop2_lem}, the functional in the left
hand side of \eqref{Phi_prop2} is well-defined for $y_0\in
V^{-\beta}(\mathbb{T}^3)$ with $\beta<1/2$. In particular, Lemma
\ref{Phi_prop2_lem} and explicit formula \eqref{exp_sol} show,
that the solution of problem \eqref{npe}, \eqref{NPE_boundcond},
\eqref{Helmholtz_incond} is well-defined for any initial data
$y_0\in V^0$ and is infinitely differentiable for every
$x\in\mathbb{T}^3$ and $t\in(0,T)$, where $T$ depends on the
choice of $y_0$.

In the following two sections we justify our choice of $V^0$ as the phase space of the corresponding dynamical system.

\subsection{Unique solvability  of NPE}

Let $Q_T=(0,T)\times \mathbb{T}^3,\;T>0$ or $T=\infty $. The
following space of solutions for NPE will be used:
$$
    V^{1,2(-1)}(Q_T)=L_2(0,T;V^1)\cap H^1(0,T;V^{-1})
$$
We look for solutions $\omega (t,x;\omega_0)$ satisfying

\begin{condition}\label{ex cond} If initial condition $\omega_0\in V^0\setminus
\{ 0\}$ and solution $\omega (t,x;\omega_0)\in V^{1,2(-1)}(Q_T)$
then $ \omega(t,\cdot;\omega_0)\ne 0\; \forall t\in [0,T]$
\end{condition}

\begin{theorem} For each $\omega_0 \in V^0$ there exists $T>0$ such
that there exists unique solution $\omega (t,x;\omega_0)\in
V^{1,2(-1)}(Q_T)$ of the problem
\eqref{npe},\eqref{NPE_boundcond}, \eqref{Helmholtz_incond}
satisfying Condition {{\ref{ex cond}}}
\end{theorem}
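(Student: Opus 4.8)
The plan is to \emph{construct} the solution from the explicit formula of Lemma~\ref{exp_sol_lem} and to obtain \emph{uniqueness} by reversing that construction; Condition~\ref{ex cond} turns out to be exactly the a priori information which makes the reversal legitimate.

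\textbf{Existence.} Write $S(t)=\mathbf S(t,\cdot;\omega_0)$ and $g(t)=1-\int_0^t\Phi(S(\tau))\,d\tau$. By Lemma~\ref{Phi_prop1_lem} and the smoothing of the Stokes flow, $|\Phi(S(\tau))|\le c\|S(\tau)\|_{3/2}\le c'\tau^{-3/4}\|\omega_0\|_0$, so $\Phi(S(\cdot))\in L_1(0,T')$ for every $T'$; hence $t\mapsto\int_0^t\Phi(S(\tau))\,d\tau$ is continuous and vanishes at $t=0$ (this is also quantified by Lemma~\ref{Phi_prop2_lem}), and there is $T>0$ with $g(t)\ge 1/2$ on $[0,T]$. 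Put $\omega(t,x;\omega_0)=S(t,x;\omega_0)/g(t)$; by Lemma~\ref{exp_sol_lem} this solves \eqref{npe},\eqref{NPE_boundcond},\eqref{Helmholtz_incond}, so it remains to check $\omega\in V^{1,2(-1)}(Q_T)$ and Condition~\ref{ex cond}. Since $V^0$ is the trace space for the heat (Stokes) flow, $S\in V^{1,2(-1)}(Q_T)$: indeed $\int_0^T\|S(t)\|_1^2\,dt=\sum_{k\ne0}\tfrac{1-e^{-2|k|^2T}}{2}|\hat\omega_0(k)|^2\le\tfrac12\|\omega_0\|_0^2$ and $\|\partial_tS(t)\|_{-1}=\|S(t)\|_1$. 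Boundedness of $1/g$ gives at once $\omega\in L_2(0,T;V^1)$, and from the formula
\begin{equation*}
 \partial_t\omega=\frac{\Delta S}{g}+\frac{\Phi(S)\,S}{g^2},
\end{equation*}
the first term lies in $L_2(0,T;V^{-1})$ and the second is dominated in $V^{-1}$ by $4|\Phi(S(t))|\,\|S(t)\|_0$, which is brought into $L_2(0,T)$ by Lemma~\ref{Phi_prop1_lem} together with the smoothing estimates for $S$ (shrinking $T$ if necessary). Finally $\|S(t;\omega_0)\|_0^2=\sum_{k\ne0}|\hat\omega_0(k)|^2e^{-2|k|^2t}>0$ for $\omega_0\ne0$ and $g(t)\ge1/2>0$, so $\omega(t,\cdot;\omega_0)\ne0$ for all $t\in[0,T]$; as $V^{1,2(-1)}(Q_T)\hookrightarrow C([0,T];V^0)$ this is Condition~\ref{ex cond}.

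\textbf{Uniqueness.} Let $\omega\in V^{1,2(-1)}(Q_T)$ be \emph{any} solution satisfying Condition~\ref{ex cond}. Then $\omega\in C([0,T];V^0)$ and $\omega(t)\ne0$ on the compact $[0,T]$, so $\delta:=\inf_{[0,T]}\|\omega(t)\|_0>0$. Inserting the interpolation inequality $\|u\|_{1/2}^2\le\|u\|_0\|u\|_1$ into the estimate $|\Phi(u)|\le c\|u\|_{1/2}^3/\|u\|_0^2$ (used inside the proof of Lemma~\ref{Phi_prop1_lem}) gives $|\Phi(\omega(t))|\le c\,\delta^{-1/2}\|\omega(t)\|_1^{3/2}$, hence $\Phi(\omega(\cdot))\in L_{4/3}(0,T)\subset L_1(0,T)$. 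Set $\alpha(t)=\exp\!\big(\int_0^t\Phi(\omega(\tau))\,d\tau\big)>0$: it is absolutely continuous, $\alpha(0)=1$, $\alpha'/\alpha=\Phi(\omega)$ a.e. Using the NPE, $\partial_t(\omega/\alpha)=\alpha^{-1}\big(\Delta\omega+\Phi(\omega)\omega\big)-\alpha^{-2}\alpha'\omega=\Delta(\omega/\alpha)$, so $\widehat S:=\omega/\alpha\in V^{1,2(-1)}(Q_T)$ solves the Stokes system with $\widehat S(0)=\omega_0$; by uniqueness for that system $\widehat S(t)=S(t)$, i.e.\ $\omega(t)=\alpha(t)S(t)$. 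Substituting back and using that $\Phi$ is homogeneous of degree one by \eqref{Phi_def}, $\Phi(\omega(t))=\alpha(t)\Phi(S(t))$, whence $\frac{d}{dt}\big(1/\alpha\big)=-\Phi(S(t))$ and $1/\alpha(t)=1-\int_0^t\Phi(S(\tau))\,d\tau=g(t)$. Thus every such solution equals $S(t)/g(t)$, the solution built above; in particular it is unique (and necessarily $g>0$ on $[0,T]$).

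\textbf{Main obstacle.} The difficulty is concentrated in the interplay between the quantitative control of $\Phi$ (Lemmas~\ref{Phi_prop1_lem}--\ref{Phi_prop2_lem}) and parabolic smoothing. In the existence step the delicate point is putting the nonlinear term $\Phi(S)S/g^2$ into $L_2(0,T;V^{-1})$ near $t=0$, where the smoothing of $S$ and the smallness of $T$ are both essential and which is precisely why $V^0$ is the correct phase space. In the uniqueness step the crucial fact is the integrability $\Phi(\omega(\cdot))\in L_1(0,T)$: it relies on the non-degeneracy $\inf_{[0,T]}\|\omega(t)\|_0>0$, which is \emph{exactly} what Condition~\ref{ex cond} supplies — without it the renormalizing factor $\alpha(t)$ could not be defined and the reduction of an arbitrary solution to the explicit formula would break down. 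I expect this use of Condition~\ref{ex cond} to be the conceptual heart of the argument.
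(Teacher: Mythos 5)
First, a point of comparison: the paper does not actually prove this theorem — it states it and refers to \cite{F4} — so there is no internal argument to measure yours against. Your overall strategy (build the solution from the explicit formula of Lemma~\ref{exp_sol_lem}; for uniqueness, renormalize an arbitrary solution by $\alpha(t)=\exp\bigl(\int_0^t\Phi(\omega(\tau))\,d\tau\bigr)$ to reduce it to the Stokes flow) is the natural one and is exactly in the spirit of how the paper uses \eqref{exp_sol} throughout. Your uniqueness half is essentially sound: Condition~\ref{ex cond} together with $V^{1,2(-1)}(Q_T)\hookrightarrow C([0,T];V^0)$ gives $\inf_{[0,T]}\|\omega(t)\|_0>0$, which makes $\Phi(\omega(\cdot))$ integrable, and the degree-one homogeneity of $\Phi$ then recovers $1/\alpha(t)=1-\int_0^t\Phi(\mathbf{S}(\tau))\,d\tau$. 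This is a clean and correct identification of the role of Condition~\ref{ex cond}.

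The existence half, however, has a genuine gap at precisely the point you flag as delicate: putting $\Phi(\mathbf{S})\mathbf{S}/g^2$ into $L_2(0,T;V^{-1})$. Since $\Phi(\mathbf{S}(t))$ is a scalar, $\|\Phi(\mathbf{S}(t))\mathbf{S}(t)\|_{-1}=|\Phi(\mathbf{S}(t))|\,\|\mathbf{S}(t)\|_{-1}$, and the only quantitative control you invoke is Lemma~\ref{Phi_prop1_lem}, i.e.\ $|\Phi(\mathbf{S}(t))|\le c\|\mathbf{S}(t)\|_{3/2}$. For $\omega_0\in V^0$ parabolic smoothing gives only $\|\mathbf{S}(t)\|_{3/2}\le Ct^{-3/4}\|\omega_0\|_0$, and $t^{-3/4}$ is not square-integrable near $t=0$; equivalently, $\int_0^T\|\mathbf{S}(t)\|_{3/2}^2\,dt=\sum_{k\ne0}|k|^3|\hat\omega_0(k)|^2\frac{1-\mathrm{e}^{-2|k|^2T}}{2|k|^2}\sim\tfrac12\|\omega_0\|_{1/2}^2$, which is finite only when $\omega_0\in V^{1/2}$. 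So ``shrinking $T$'' cannot repair this: the obstruction is a non-integrable singularity at $t=0^+$, not the length of the interval, and the various $L_3$-type bounds on the trilinear form all lead to the same $t^{-3/4}$ barrier. As written, your argument establishes $\partial_t\omega\in L_2(0,T;V^{-1})$ only for $\omega_0\in V^{1/2}$, not for every $\omega_0\in V^0$ as the theorem asserts; closing the gap requires either a genuinely sharper estimate on $\Phi(\mathbf{S}(t))$ exploiting cancellations in the trilinear form, or the argument of \cite{F4}. The remaining pieces of the existence step (choice of $T$ with $g\ge1/2$, $\mathbf{S}\in L_2(0,T;V^1)$, $\|\partial_t\mathbf{S}\|_{-1}=\|\mathbf{S}\|_1$, verification of Condition~\ref{ex cond}) are fine.
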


\begin{theorem} The solution $\omega (t,x;\omega_0)\in
V^{1,2(-1)}(Q_T)$ of the problem
\eqref{npe},\eqref{NPE_boundcond}, \eqref{Helmholtz_incond}
depends continuously on initial condition $\omega_0\in V^0$.
\end{theorem}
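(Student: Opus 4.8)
The plan is to exploit the explicit representation \eqref{exp_sol}, which exhibits the solution map $\omega_0\mapsto\omega(\cdot,\cdot;\omega_0)$ as the composition of the \emph{linear} Stokes solution operator $\omega_0\mapsto\mathbf{S}(\cdot,\cdot;\omega_0)$ with the renormalization by the scalar factor $d(t;\omega_0):=1-\int_0^t\Phi(\mathbf{S}(\tau;\omega_0))\,d\tau$. Fix $\omega_0\in V^0$; if $\omega_0=0$ then $\omega\equiv0$ and there is nothing to prove, so assume $\omega_0\neq0$ and let $T$ be the existence time from the previous theorem. First I would record the properties of the denominator: by Lemma \ref{Phi_prop2_lem} and its proof, $\tau\mapsto\Phi(\mathbf{S}(\tau;\omega_0))\in L_1(0,T)$, hence $d(\cdot;\omega_0)\in C([0,T])$ with $d(0;\omega_0)=1$; and $d(t;\omega_0)\neq0$ on $[0,T]$, because \eqref{exp_sol} gives $\|\omega(t;\omega_0)\|_0=\|\mathbf{S}(t;\omega_0)\|_0/|d(t;\omega_0)|$, where, since $\omega(\cdot;\omega_0)\in V^{1,2(-1)}(Q_T)\hookrightarrow C([0,T];V^0)$ satisfies Condition \ref{ex cond}, the left-hand side is finite and bounded below by a positive constant on $[0,T]$, while $\|\mathbf{S}(t;\omega_0)\|_0\in(0,\infty)$. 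Continuity together with $d(0;\omega_0)=1>0$ then yields $d(t;\omega_0)\ge 2\delta>0$ on $[0,T]$ for some $\delta>0$.

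The crux is to show that $\omega_0\mapsto d(\cdot;\omega_0)$ is continuous from $V^0$ into $C([0,T])$. Let $\omega_0^n\to\omega_0$ in $V^0$. For each fixed $\tau>0$ the operator $\mathbf{S}(\tau;\cdot)$ is bounded from $V^0$ into $V^{3/2}$ (with $\|\mathbf{S}(\tau;h)\|_{3/2}\le C\tau^{-3/4}\|h\|_0$), so $\mathbf{S}(\tau;\omega_0^n)\to\mathbf{S}(\tau;\omega_0)$ in $V^{3/2}$; since $\mathbf{S}(\tau;\omega_0)\neq0$ and the functional $\Phi$ of \eqref{Phi_def} is continuous on $V^{3/2}\setminus\{0\}$ (its numerator is a continuous trilinear form on $V^{1/2}$ by the H\"older--Sobolev estimate in the proof of Lemma \ref{Phi_prop1_lem}, its denominator is $\|\cdot\|_0^2$), it follows that $\Phi(\mathbf{S}(\tau;\omega_0^n))\to\Phi(\mathbf{S}(\tau;\omega_0))$ for every $\tau>0$. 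For domination, fix $\beta\in(0,1/2)$; the estimate of Lemma \ref{Phi_prop1_lem} together with the extremal bound \eqref{f_t_max} gives $|\Phi(\mathbf{S}(\tau;h))|\le C_\beta\,\tau^{-(3+2\beta)/4}\|h\|_{-\beta}$, and $\tau^{-(3+2\beta)/4}\in L_1(0,T)$ exactly because $\beta<1/2$; as $\|\omega_0^n\|_{-\beta}\le\|\omega_0^n\|_0$ is bounded, the functions $\Phi(\mathbf{S}(\cdot;\omega_0^n))$ are dominated by a fixed $L_1(0,T)$ function. Dominated convergence then gives
\[
\sup_{t\in[0,T]}\bigl|d(t;\omega_0^n)-d(t;\omega_0)\bigr|\le\int_0^T\bigl|\Phi(\mathbf{S}(\tau;\omega_0^n))-\Phi(\mathbf{S}(\tau;\omega_0))\bigr|\,d\tau\longrightarrow0 .
\]
In particular $d(t;\omega_0^n)\ge\delta>0$ on $[0,T]$ for large $n$, so $\mathbf{S}(\cdot;\omega_0^n)/d(\cdot;\omega_0^n)$ is well defined on $[0,T]$, solves NPE (as in the derivation of \eqref{exp_sol} in \cite{F2}--\cite{F4}) and satisfies Condition \ref{ex cond}; by uniqueness it equals $\omega(\cdot;\omega_0^n)$, whose existence interval therefore contains the common interval $[0,T]$.

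It remains to assemble the pieces. The Stokes operator $\omega_0\mapsto\mathbf{S}(\cdot,\cdot;\omega_0)$ is linear and bounded from $V^0$ into $V^{1,2(-1)}(Q_T)$ (by the elementary bounds $\|\mathbf{S}(\cdot;h)\|_{L_2(0,T;V^1)}^2\le\tfrac12\|h\|_0^2$, $\|\partial_t\mathbf{S}(\cdot;h)\|_{L_2(0,T;V^{-1})}^2\le\tfrac12\|h\|_0^2$), so $\mathbf{S}(\cdot;\omega_0^n-\omega_0)\to0$ in $V^{1,2(-1)}(Q_T)$. Writing
\[
\omega(\cdot;\omega_0^n)-\omega(\cdot;\omega_0)=\frac{\mathbf{S}(\cdot;\omega_0^n-\omega_0)}{d(\cdot;\omega_0^n)}+\omega(\cdot;\omega_0)\cdot\frac{d(\cdot;\omega_0)-d(\cdot;\omega_0^n)}{d(\cdot;\omega_0^n)},
\]
the first summand tends to $0$ in $V^{1,2(-1)}(Q_T)$ because division by $d(\cdot;\omega_0^n)$, uniformly bounded below by $\delta$, is a bounded operation on this space, and the second tends to $0$ because $\omega(\cdot;\omega_0)\in V^{1,2(-1)}(Q_T)$ is fixed while the scalar factor $[d(\cdot;\omega_0)-d(\cdot;\omega_0^n)]/d(\cdot;\omega_0^n)$ tends to $0$ in $C([0,T])$ together with its time derivative in $L_1(0,T)$. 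Hence $\omega(\cdot;\omega_0^n)\to\omega(\cdot;\omega_0)$ in $V^{1,2(-1)}(Q_T)$, which is the asserted continuous dependence.

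I expect the only genuinely delicate point to be exactly the one already met in the existence and uniqueness theorem of \cite{F2}--\cite{F4}: that multiplication by the renormalization factor (which is bounded and bounded below, but only $W^{1,1}$ in time) is a bounded operation on $V^{1,2(-1)}(Q_T)$, i.e.\ control of the $H^1(0,T;V^{-1})$-component near $t=0$, where $\Phi(\mathbf{S}(\cdot;\omega_0^n))$ need not be square-integrable in time for rough data; this is handled by the same estimates underlying that existence theorem. Everything else is soft: boundedness of the Stokes semigroup, continuity of the algebraic functional $\Phi$ away from the origin, and the dominated-convergence argument driven by the $V^{-\beta}$-estimate of Lemma \ref{Phi_prop2_lem} with $\beta<1/2$.
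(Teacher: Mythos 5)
The paper does not actually prove this theorem: it states it and refers to \cite{F4}, so there is no in-paper argument to compare against line by line. That said, your route --- continuity of the solution map read off from the explicit formula \eqref{exp_sol}, i.e.\ continuity of the linear Stokes part plus continuity of the scalar denominator $d(t;\omega_0)=1-\int_0^t\Phi(\mathbf{S}(\tau;\omega_0))\,d\tau$ --- is clearly the intended one, since the whole theory in Section~2 is built on that formula. The core of your argument is sound: the lower bound $d\ge 2\delta>0$ on the existence interval follows correctly from Condition \ref{ex cond} and $d(0)=1$; the uniform convergence $d(\cdot;\omega_0^n)\to d(\cdot;\omega_0)$ via pointwise continuity of $\Phi$ on $V^{3/2}\setminus\{0\}$, heat-semigroup smoothing, and domination by $C\tau^{-(3+2\beta)/4}$ (integrable precisely because $\beta<1/2$) is exactly the content of the proof of Lemma \ref{Phi_prop2_lem} redeployed for convergence; and the identification of $\mathbf{S}(\cdot;\omega_0^n)/d(\cdot;\omega_0^n)$ with $\omega(\cdot;\omega_0^n)$ by uniqueness is legitimate.

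The one point you flag but do not close is a genuine one, and it is worth being precise about why. In the decomposition of $\omega(\cdot;\omega_0^n)-\omega(\cdot;\omega_0)$, the $L_2(0,T;V^1)$ and $C([0,T];V^0)$ convergences are immediate, but for the $H^1(0,T;V^{-1})$ component you need $g_n'\,\omega\to 0$ in $L_2(0,T;V^{-1})$, where $g_n'$ contains $\Phi(\mathbf{S}(\tau;\omega_0^n))$. The only bound available in the paper, $|\Phi(\mathbf{S}(\tau;h))|\le C\tau^{-(3+2\beta)/4}\|h\|_{-\beta}$, places this quantity in $L_1(0,T)$ but not in $L_2(0,T)$ near $\tau=0$ for data that are merely in $V^0$, so the product with the bounded function $\|\omega(\tau)\|_{-1}$ is not obviously square-integrable. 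You are right that this is exactly the same difficulty already hidden in the assertion $\omega\in H^1(0,T;V^{-1})$ of the existence theorem (since $\partial_t\omega$ contains the term $\mathbf{S}\,\Phi(\mathbf{S})/d^2$), so whatever estimate \cite{F4} uses to prove membership in $H^1(0,T;V^{-1})$ will, combined with your dominated-convergence scheme, also give the convergence; but as written your proof delegates rather than supplies that estimate. A clean way to sidestep it entirely would be to state the continuity in $L_2(0,T;V^1)\cap C([0,T];V^0)$, which your argument proves completely, and note that the $H^1(0,T;V^{-1})$ statement requires the quantitative version of the existence estimates from \cite{F4}.
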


One can see the proof of these Theorems in \cite{F4}.

Below we will use the following specification of existence theorem
formulated above for small initial conditions.
\begin{lemma}\label{sol_est_th0}
There exists $r_0>0$ such that for each $\omega_0\in
B_{r_0}=\{y\in V^0:{{\|y\|_{0}}}\leq r_0\}$ the solution
$\omega(t,x;\omega_0)$ of problem
\eqref{npe},\eqref{NPE_boundcond}, \eqref{Helmholtz_incond}
 satisfies
\begin{equation}\label{sol_est0}
{\|\omega(t,\cdot;\omega_0)\|_{0}}\leq c_0 \mathrm{e}^{-t}, \text{ as }
t\rightarrow \infty.
\end{equation}
\end{lemma}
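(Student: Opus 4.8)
The plan is to derive everything directly from the explicit representation \eqref{exp_sol}: write
\[
\omega(t,\cdot;\omega_0)=\frac{\mathbf{S}(t,\cdot;\omega_0)}{1-\int_0^t\Phi(\mathbf{S}(\tau,\cdot;\omega_0))\,d\tau},
\]
and estimate numerator and denominator separately. For the numerator I would use that
$\mathbf{S}(t,\cdot;\omega_0)=\sum_{k\neq 0}\hat\omega_0(k)\mathrm{e}^{\mathrm{i}(k,\cdot)}\mathrm{e}^{-|k|^2 t}$ together with the fact that $|k|^2\geq 1$ for every $k\in\mathbb{Z}^3\setminus\{0\}$, so that Parseval's identity gives
$\|\mathbf{S}(t,\cdot;\omega_0)\|_0^2=\sum_{k\neq 0}|\hat\omega_0(k)|^2\mathrm{e}^{-2|k|^2 t}\leq \mathrm{e}^{-2t}\|\omega_0\|_0^2$; i.e. the Stokes semigroup already decays like $\mathrm{e}^{-t}$ in $V^0$.

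For the denominator I would invoke Lemma \ref{Phi_prop2_lem} with a fixed $\beta\in(0,1/2)$, which yields
\[
\left|\int_0^t\Phi(\mathbf{S}(\tau,\cdot;\omega_0))\,d\tau\right|\leq c_1\|\omega_0\|_{-\beta}\leq c_1\|\omega_0\|_0,
\]
the last step because $\|z\|_{-\beta}^2=\sum_{k\neq 0}|k|^{-2\beta}|\hat z(k)|^2\leq\sum_{k\neq 0}|\hat z(k)|^2=\|z\|_0^2$, again by $|k|\geq 1$. The essential feature is that this bound is uniform in $t$. Hence, choosing $r_0>0$ so small that $c_1 r_0\leq 1/2$, for every $\omega_0\in B_{r_0}$ one has $\bigl|1-\int_0^t\Phi(\mathbf{S}(\tau,\cdot;\omega_0))\,d\tau\bigr|\geq 1-c_1\|\omega_0\|_0\geq 1/2$ for all $t\geq 0$. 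In particular the denominator in \eqref{exp_sol} never vanishes, so Condition \ref{ex cond} is not violated and the local solution provided by the existence theorems above extends to a global one on $Q_\infty$.

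Combining the two estimates,
\[
\|\omega(t,\cdot;\omega_0)\|_0=\frac{\|\mathbf{S}(t,\cdot;\omega_0)\|_0}{\bigl|1-\int_0^t\Phi(\mathbf{S}(\tau,\cdot;\omega_0))\,d\tau\bigr|}\leq\frac{\mathrm{e}^{-t}\|\omega_0\|_0}{1/2}=2\|\omega_0\|_0\,\mathrm{e}^{-t}\leq 2r_0\,\mathrm{e}^{-t}
\]
for all $t\geq 0$, which is \eqref{sol_est0} with $c_0=2r_0$ (equivalently $c_0=2\|\omega_0\|_0$). I do not expect a genuine obstacle here: the whole content has been pushed into Lemma \ref{Phi_prop2_lem}, whose uniform-in-$t$ estimate is precisely what keeps the denominator bounded away from zero for all time. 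The only points requiring a little care are to obtain the global extension via Condition \ref{ex cond} and the preceding existence and uniqueness theorems rather than re-proving it, and to note that Lemma \ref{Phi_prop2_lem} admits any $\beta<1/2$ and in particular some $\beta>0$, which is what makes the harmless inequality $\|\omega_0\|_{-\beta}\leq\|\omega_0\|_0$ available without an extra constant.
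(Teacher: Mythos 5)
Your proof is correct and follows essentially the same route as the paper: the explicit formula \eqref{exp_sol}, the bound $\|\mathbf{S}(t,\cdot;\omega_0)\|_0\le\|\omega_0\|_0\mathrm{e}^{-t}$ for the numerator, and Lemma \ref{Phi_prop2_lem} (with $\|\omega_0\|_{-\beta}\le\|\omega_0\|_0$) to keep the denominator above $1-c_1\|\omega_0\|_0\ge 1/2$ for $r_0=\tfrac{1}{2c_1}$. The paper's version is just a more compressed form of the same argument, yielding the same constant $c_0=2r_0=1/c_1$.
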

\begin{proof}
In virtue of explicit formula \eqref{exp_sol} and Lemma
\ref{Phi_prop2_lem},
\begin{equation}\label{sol_est01}
{\|\omega(t,\cdot;\omega_0)\|_{0}}\leq\frac{{\|\mathbf{S}(t,\cdot;\omega_0)\|_{0}}}{1-\int_0^t\Phi(\mathbf{S}(\tau,\cdot;\omega_0))d\tau}
\leq\frac{{\|\omega_0\|_{0}}\mathrm{e}^{-t}}{1-c_1{\|\omega_0\|_{0}}},
\end{equation}
where $c_1$ is the constant from estimate \eqref{Phi_prop2}. So,
taking in \eqref{sol_est01} ${\|\omega_0\|_{0}}=r_0=\frac{1}{2c_1}$, we
get the bound \eqref{sol_est0} with $c_0=\frac{1}{c_1}$.
\end{proof}

%%%%%%%%%%%%%%%%%%%%%%%%%%%%%%%%%%%%%%%%%%%%%%%%%%%%%%%%%%%%%%%%%%%%%%%%%
\subsection{Structure of dynamical flow for NPE}

We will use $V^0(\mathbb{T}^3)\equiv V^0$ as the phase space for
problem \eqref{npe},\eqref{NPE_boundcond},
\eqref{Helmholtz_incond}.

\begin{definition}\label{M-} The set $M_-\subset V^0$ of $\omega_0$, such that the
corresponding solution $\omega(t,x;\omega_0)$ of problem
\eqref{npe},\eqref{NPE_boundcond}, \eqref{Helmholtz_incond}
satisfies inequality
$$
      \|\omega(t,\cdot ;\omega_0)\|_0\le \alpha \|\omega_0\|_0\mathrm{e}^{-t}\qquad \forall
      t>0
$$
is called the set of stability. Here $\alpha >1$ is a fixed number
depending on $\| \omega_0\|_0$.
\end{definition}

\begin{definition}\label{M+} The set $M_+\subset V^0$ of $\omega_0$, such that the
corresponding solution $\omega(t,x;\omega_0)$ exists only on a finite
time interval $t\in (0,t_0)$, and blows up at $t=t_0$ is called
the set of explosions.
\end{definition}

\begin{definition}\label{Mg} The set $M_g\subset V^0$ of $\omega_0$, such that the corresponding
solution ${\omega(t,x;\omega_0)}$ exists for time $t\in \mathbb{R}_+$, and
$\|\omega(t,x;\omega_0)\|_{0}\to \infty$ as $t\to \infty$ is called the
set of growing.
\end{definition}

\begin{lemma}(see \cite{F4}) Sets $M_-,M_+,M_g$ are not empty, and $M_-\cup M_+\cup
M_g=V^0$
\end{lemma}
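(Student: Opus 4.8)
The statement to prove is that the three sets $M_-$, $M_+$, $M_g$ are nonempty and that their union is all of $V^0$. The plan is to treat the two assertions separately, since they are of very different natures. The covering assertion $M_-\cup M_+\cup M_g=V^0$ should be essentially immediate from the explicit solution formula \eqref{exp_sol}: for any $\omega_0\in V^0$, set $\psi(t)=\int_0^t\Phi(\mathbf{S}(\tau,\cdot;\omega_0))\,d\tau$, which by Lemma \ref{Phi_prop2_lem} is a bounded, continuous (indeed $C^1$) function of $t$ on $[0,\infty)$, with $\psi(0)=0$. The solution $\omega(t,\cdot;\omega_0)$ exists and is smooth exactly as long as $1-\psi(t)>0$. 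So either (i) $1-\psi(t)$ stays positive for all $t\ge 0$, in which case $\|\omega(t,\cdot;\omega_0)\|_0=\|\mathbf{S}(t,\cdot;\omega_0)\|_0/(1-\psi(t))$; here one splits according to whether $\inf_{t\ge 0}(1-\psi(t))>0$ (then decay of $\mathbf{S}$ forces exponential decay, giving $\omega_0\in M_-$ — one must check the constant $\alpha$ can be chosen depending only on $\|\omega_0\|_0$, which follows from $\|\mathbf{S}(t)\|_0\le\|\omega_0\|_0\mathrm{e}^{-t}$ and the uniform bound $|\psi|\le c_1\|\omega_0\|_0$ from \eqref{Phi_prop2}) or $\inf_{t\ge 0}(1-\psi(t))=0$ while never vanishing (then $\omega_0\in M_g$, since $\|\omega(t,\cdot;\omega_0)\|_0$ is then a smooth function on $\mathbb R_+$ that is unbounded because $1-\psi(t_n)\to 0$ along some sequence while $\|\mathbf S(t_n)\|_0$ — wait, $\mathbf S\to 0$, so one needs the ratio to blow up: this requires a small argument that $1-\psi(t)\to 0$ faster than $\mathbf S$, or rather one should argue along the sequence where $1-\psi$ approaches its infimum). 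Alternatively (ii) $1-\psi(t_0)=0$ for some first finite $t_0$, and then the solution blows up at $t_0$, so $\omega_0\in M_+$. The trichotomy exhausts all cases, giving the covering.

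The genuinely substantive part is nonemptiness of each set. For $M_-$: Lemma \ref{sol_est_th0} already exhibits a whole ball $B_{r_0}\subset M_-$, so $M_-\neq\varnothing$ (in fact has nonempty interior). For $M_+$ and $M_g$: the idea is to produce initial data making $\psi(t)$ grow. Choose $\omega_0=\lambda u$ for a fixed nonzero $u\in V^0$ and a scalar $\lambda$. By the quadratic homogeneity of the numerator of $\Phi$ (the numerator of \eqref{Phi_def} is cubic in its argument, the denominator quadratic, so $\Phi(\lambda w)=\lambda\Phi(w)$), and linearity of the Stokes semigroup, one gets $\Phi(\mathbf S(\tau;\lambda u))=\lambda\,\Phi(\mathbf S(\tau;u))$, hence $\psi(t)=\lambda\int_0^t\Phi(\mathbf S(\tau;u))\,d\tau$. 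The key point is then to choose $u$ so that $\int_0^\infty\Phi(\mathbf S(\tau;u))\,d\tau>0$ (or indeed so that $\int_0^t\Phi(\mathbf S(\tau;u))\,d\tau$ is bounded below by a positive constant for large $t$) — this is exactly where the hard estimate of type \eqref{main_est_b0} enters, showing $\Phi(\mathbf S(t;u))$ is positive and not too small; one can cite \cite{F5},\cite{FSh},\cite{F4} for the existence of such $u$. Then for $\lambda$ large enough, $\sup_t\psi(t)\ge 1$, which by continuity and $\psi(0)=0$ forces $1-\psi(t_0)=0$ at some finite $t_0$: such $\lambda u\in M_+$. For $M_g$ one instead wants $\psi(t)\to L\le 1$ with $1-\psi(t)$ never zero but $\inf(1-\psi)=0$; fine-tuning the scalar $\lambda$ (a continuity/intermediate-value argument on $\lambda\mapsto\sup_{t}\psi_\lambda(t)$) gives a critical $\lambda_*$ at which $\sup_t\psi_{\lambda_*}(t)=1$ but the supremum is attained only in the limit $t\to\infty$, which places $\lambda_* u\in M_g$; one must check the supremum is not attained at any finite $t$, which can be arranged if $\Phi(\mathbf S(t;u))>0$ for all finite $t$ so $\psi$ is strictly increasing and $\psi(t)<\sup\psi$ for every finite $t$.

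\textbf{Main obstacle.} The crux is the lower bound on $\Phi(\mathbf S(t;u))$ — i.e. exhibiting even a single $u\in V^0$ for which $\int_0^\infty\Phi(\mathbf S(\tau;u))\,d\tau$ is positive and, for the $M_g$ construction, understanding the precise rate at which the associated $\psi$ approaches its supremum. This is precisely the content of the difficult estimate \eqref{main_est_b0}, whose proof (in \cite{F5},\cite{FSh},\cite{FSh16}) is acknowledged to be long; for the present lemma one only needs the qualitative consequence (existence of $u$ with the right sign and the strict positivity/monotonicity of $\psi$), so I would invoke it as a black box rather than reprove it. The secondary obstacle is the borderline case in the covering argument, namely verifying that when $1-\psi(t)$ stays strictly positive but has infimum zero the norm genuinely diverges (placing $\omega_0$ in $M_g$ rather than in some fourth category): this needs the observation that along a sequence $t_n$ with $1-\psi(t_n)\to 0$ one has $\|\omega(t_n,\cdot;\omega_0)\|_0\ge \|\mathbf S(t_n;\omega_0)\|_0/(1-\psi(t_n))$, and that $\psi(t)$ is eventually essentially constant (its derivative $\Phi(\mathbf S(t;\omega_0))$ is integrable), so $1-\psi(t)$ converges to its infimum and the blow-up of the ratio is controlled — but actually the cleanest route is to note $\psi$ is monotone increasing (granted positivity of $\Phi$ on the relevant trajectories, or more carefully, that $\psi$ has a limit as $t\to\infty$ by integrability) so $1-\psi(t)\downarrow c_\infty\ge 0$; if $c_\infty>0$ we land in $M_-\cup\{$bounded non-decaying$\}$ which one shows equals $M_-$ by the decay of $\mathbf S$, and if $c_\infty=0$ we land in $M_g$. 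I would state this dichotomy carefully and refer to \cite{F4} for the full verification.
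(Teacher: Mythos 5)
The paper offers no proof of this lemma: it is quoted verbatim from \cite{F4}, so there is nothing in the text to compare against line by line. The closest in-paper material is the geometric description in Section 2.7 (the sets $A_-$, $B_+=B_{+,f}\cup B_{+,\infty}$, the map $\Gamma(v)=v/b(v)$, and Theorem 2.3 identifying $M_-=V^0_-$, $M_+=V^0_+\cup B_{+,f}$, $M_g=B_{+,\infty}$), and your argument is essentially a reconstruction of exactly that framework: you classify $\omega_0$ by the behaviour of $\psi(t)=\int_0^t\Phi(\mathbf S(\tau;\omega_0))\,d\tau$ relative to $1$, and your critical scaling $\lambda_*=1/\sup_t\psi_u(t)$ is precisely the paper's map $\Gamma$. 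The nonemptiness arguments are sound: $B_{r_0}\subset M_-$ via Lemma \ref{sol_est_th0}; the homogeneity $\Phi(\lambda w)=\lambda\Phi(w)$ together with the function $u$ of Theorem \ref{main_est_th} (for which $\Phi(\mathbf S(t;u))>0$ for all $t$, so $\psi_u$ is strictly increasing with finite positive limit $L$ by Lemma \ref{Phi_prop2_lem}) gives blow-up for $\lambda>1/L$, hence $M_+\neq\emptyset$.

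The genuine gap --- which you flag but do not close --- is the borderline case: when $1-\psi(t)>0$ for all finite $t$ but $\psi(t)\uparrow 1$, you must show $\|\mathbf S(t;\omega_0)\|_0/(1-\psi(t))\to\infty$, both to complete the covering (ruling out a ``fourth category'' of bounded, non-decaying solutions) and to conclude $\lambda_*u\in M_g$. This is not automatic: numerator and denominator both tend to zero, and the comparison of rates is delicate. One has $1-\psi(t)=\int_t^\infty\Psi(\mathbf S(\tau))/\|\mathbf S(\tau)\|_0^2\,d\tau$, and the decay rate of the cubic form $\Psi(\mathbf S(\tau))$ is governed by the smallest value of $|k_1|^2+|k_2|^2+|k_3|^2$ over frequency triples in the spectrum of $\omega_0$ with $k_1+k_2+k_3=0$ (single modes contribute nothing to $\Psi$), whereas $\|\mathbf S(t)\|_0$ decays like $\mathrm e^{-\mu^2 t}$ with $\mu^2$ the lowest frequency present. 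Whether the ratio diverges therefore depends on a spectral comparison that your sketch does not perform; your fallback of ``integrability of $\Phi(\mathbf S(t))$'' only shows $1-\psi(t)$ converges, not that it is $o(\|\mathbf S(t)\|_0)$. Deferring this to \cite{F4} is legitimate for a lemma the paper itself does not prove, but as a self-contained proof the argument is incomplete at exactly this point.
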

%%%%%%%%%%%%%%%%%%%%%%%%%%%%%%%%%%%%%%%%%%%%%%%%%%%%%%%%%%%%%%%
\subsection{On a geometrical structure of phase space}

Let define the following subsets of unit sphere: $\Sigma =\{ v\in
V^0:\; \| v\|_0=1\}$ in the phase space $V^0$:
$$
  A_-(t)=\{v\in \Sigma : \int_0^t\Phi (\mathbf{S}(\tau,\cdot; v))d\tau \le 0\},
  \quad A_-=\cap_{t\ge 0}A_-(t),
$$
$$
 B_+=\Sigma \setminus A_-\equiv
 \{ v\in \Sigma : \; \exists t_0>0 \; \int_0^{t_0}\Phi (S(\tau, \cdot; v))d\tau >0\},
$$
$$
   \partial B_+=\{ v\in \Sigma :\; \forall t>0 \int_0^{t}\Phi (\mathbf{S}(\tau ,\cdot;v))d\tau \le 0 \quad
 \mbox{and}\; \exists t_0>0: \; \int_0^{t_0}\Phi (\mathbf{S}(\tau,\cdot;v))d\tau =0 \}
$$
We introduce the following function on sphere $\Sigma $:
\begin{equation}\label{func sphere}
   B_+\ni v\to b(v)=\max_{t\ge 0} \int_0^{t}\Phi (\mathbf{S}(\tau,\cdot;v))d\tau
\end{equation}
Evidently, $b(v)>0$ and  $b(v)\to 0$ as $v\to \partial B_+$. Let
define the map $\Gamma (v)$:
\begin{equation}\label{map}
   B_+\ni v\to \Gamma (v)=\frac{1}{b(v)}v\in V^0
\end{equation}
It is clear that $\| \Gamma (v)\|_0\to \infty $ as $v\to \partial
B_+$. The set $\Gamma(B_+)$ divides $V^0$ into {two
parts: the set $V^0_-$ of points $v \in V^0\setminus \Gamma (B_+)$ lying on the same side relative to hypersurface $\Gamma (B_+)$ as the origin $0$ of $V^0$, and the set $V^0_+=V^0\setminus \{V^0_-\cup \Gamma(B_+)\}$. Since the map $\Gamma (v), v\in B_+$ is defined on subset $B_+$ of unit sphere $\Sigma \subset V^0$, we can define sets  $V^0_-, V^0_+$ as follows:}

$$
   V^0_-=\{ v\in V^0:\; [0,v]\cap \Gamma(B_+)=\emptyset \},
$$
$$
   V^0_+=\{ v\in V^0:\; [0,v)\cap \Gamma(B_+)\ne \emptyset \}
$$
Let $B_+=B_{+,f}\cup B_{+,\infty}$ where
$$
   B_{+,f}=\{ v\in B_+:\; \mbox{max in \eqref{func sphere} is achived at}\; t<\infty \}
$$
$$
   B_{+,\infty}=\{ v\in B_+:\; \mbox{max in \eqref{func sphere} is not achived at}\; t<\infty \}
$$
\begin{theorem}(see \cite{F4}) $M_-=V^0_-,\; M_+=V^0_+\cup B_{+,f},\;
M_g=B_{+,\infty}$
\end{theorem}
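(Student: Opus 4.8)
The plan is to exploit the explicit formula \eqref{exp_sol}: setting $D(t;\omega_0)=1-\int_0^t\Phi(\mathbf S(\tau;\omega_0))\,d\tau$, one has $\omega(t,x;\omega_0)=\mathbf S(t,x;\omega_0)/D(t;\omega_0)$ wherever $D\neq 0$, so the whole trajectory is governed by the scalar function $t\mapsto D(t;\omega_0)$. Since $\mathbf S(\cdot;\omega_0)$ is a heat flow on the torus, $\|\mathbf S(t;\omega_0)\|_0\le\|\omega_0\|_0\mathrm e^{-t}$ and $\mathbf S(t;\omega_0)\neq 0$ for all $t$ when $\omega_0\neq 0$; hence I would distinguish three regimes: (a) if $\inf_{t\ge 0}D(t;\omega_0)>0$, the solution is global and $\|\omega(t;\omega_0)\|_0\le(\inf_tD)^{-1}\|\omega_0\|_0\mathrm e^{-t}$, so $\omega_0\in M_-$; (b) if $D(\cdot;\omega_0)$ has a zero, then at its first zero $t_0<\infty$ the denominator vanishes while the numerator stays bounded away from $0$, so $\|\omega(t;\omega_0)\|_0\to\infty$ as $t\to t_0^-$ and $\omega_0\in M_+$; (c) if $D(t;\omega_0)>0$ for all finite $t$ but $\inf_tD(t;\omega_0)=0$, the solution is global but — as I will argue — cannot tend to $0$, so $\omega_0\notin M_-$, and by the trichotomy $M_-\cup M_+\cup M_g=V^0$ it lies in $M_g$.

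Next I would reduce to the sphere by homogeneity. The Stokes semigroup is linear and, by \eqref{Phi_def}, $\Phi$ is positively homogeneous of degree $1$; writing $\omega_0=\rho v$ with $\rho=\|\omega_0\|_0$ and $v\in\Sigma$ gives $D(t;\rho v)=1-\rho\,g_v(t)$, where $g_v(t):=\int_0^t\Phi(\mathbf S(\tau;v))\,d\tau$. As in the proof of Lemma~\ref{Phi_prop2_lem} (Lemma~\ref{Phi_prop1_lem} together with heat-kernel smoothing and decay), $\tau\mapsto\Phi(\mathbf S(\tau;v))$ is absolutely integrable on $(0,\infty)$, so $g_v$ is continuous on $[0,\infty)$ and $g_v(\infty):=\lim_{t\to\infty}g_v(t)$ exists; moreover $b(v)=\max_{t\ge 0}g_v(t)$ of \eqref{func sphere} is finite, strictly positive for $v\in B_+$, while $g_v\le 0$ for $v\in A_-$. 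Thus $\inf_tD(t;\rho v)=1-\rho\,b(v)$ for $v\in B_+$ and $\inf_tD(t;\rho v)\ge 1$ for $v\in A_-$. Because the map $\Gamma(v)=v/b(v)$ of \eqref{map} is injective and carries distinct directions to distinct rays, every ray from the origin meets $\Gamma(B_+)$ in at most one point, so $V^0=V^0_-\sqcup\Gamma(B_+)\sqcup V^0_+$ is a genuine partition, and $\rho v$ lies in $V^0_-$, on $\Gamma(B_+)$, or in $V^0_+$ according as $\rho\,b(v)<1$, $=1$, or $>1$ (with every $\rho v$, $v\in A_-$, in $V^0_-$). This is the dictionary between (a)--(c) and the geometric picture.

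With this in hand I would prove three inclusions and close by complementation. \emph{Step} (i), $V^0_-\subseteq M_-$: if $\omega_0=\rho v\in V^0_-$ then either $v\in A_-$ ($\inf_tD\ge 1$) or $v\in B_+$ with $\rho b(v)<1$ ($\inf_tD=1-\rho b(v)>0$), and (a) gives $\omega_0\in M_-$. \emph{Step} (ii), $V^0_+\cup\Gamma(B_{+,f})\subseteq M_+$: if $\omega_0=\rho v\in V^0_+$ then $\rho b(v)>1$, $D(0;\omega_0)=1>0$, $\inf_tD<0$, and continuity of $D(\cdot;\omega_0)$ yields a zero in $(0,\infty)$, so (b) applies; if $\omega_0=\Gamma(v)$ with $v\in B_{+,f}$ then $\rho b(v)=1$ and $g_v$ attains its maximum at some finite $t^\ast$, whence $D(t;\omega_0)=1-g_v(t)/b(v)\ge 0$ for all $t$ and $D(t^\ast;\omega_0)=0$, so the first zero is finite and (b) applies again. \emph{Step} (iii), $\Gamma(B_{+,\infty})\subseteq M_g$: if $\omega_0=\Gamma(v)$ with $v\in B_{+,\infty}$ then, the maximum of $g_v$ not being attained at a finite time, necessarily $b(v)=g_v(\infty)$ and $g_v(t)<b(v)$ for every finite $t$; hence $D(t;\omega_0)=1-g_v(t)/b(v)>0$ for all finite $t$ (so $\omega_0\notin M_+$) while $D(t;\omega_0)=b(v)^{-1}\int_t^\infty\Phi(\mathbf S(\tau;v))\,d\tau\to 0$. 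Putting $\kappa^2:=\min\{|k|^2:\hat v(k)\neq 0\}\ge 1$, Lemma~\ref{Phi_prop1_lem} and the refined bound $\|\mathbf S(\tau;v)\|_{3/2}\le C\mathrm e^{-\kappa^2\tau}$ for $\tau\ge 1$ give $0<D(t;\omega_0)\le C'\mathrm e^{-\kappa^2 t}$ for $t\ge 1$, whereas the $|k|=\kappa$ modes give $\|\mathbf S(t;\omega_0)\|_0\ge c''\mathrm e^{-\kappa^2 t}$; therefore $\|\omega(t;\omega_0)\|_0\ge c''/C'>0$ for all large $t$, so $\omega(t;\omega_0)\not\to 0$, $\omega_0\notin M_-$, and the trichotomy forces $\omega_0\in M_g$. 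Finally, comparing $V^0=V^0_-\sqcup\Gamma(B_{+,f})\sqcup\Gamma(B_{+,\infty})\sqcup V^0_+$ with $V^0=M_-\sqcup M_+\sqcup M_g$ and using (i)--(iii), complementation upgrades each inclusion to an equality, e.g.\ $M_-=V^0\setminus(M_+\cup M_g)\subseteq V^0\setminus(V^0_+\cup\Gamma(B_{+,f})\cup\Gamma(B_{+,\infty}))=V^0_-\subseteq M_-$, and likewise for $M_+$ and $M_g$ (the symbols $B_{+,f},B_{+,\infty}$ in the statement being understood as their $\Gamma$-images).

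The hard part is Step (iii): one must guarantee that a borderline trajectory $\Gamma(v)$ with $v\in B_{+,\infty}$ genuinely fails to decay, i.e.\ that $D(t;\omega_0)$ tends to $0$ no faster than $\|\mathbf S(t;\omega_0)\|_0$ as $t\to\infty$. This is where the quantitative control of $\Phi(\mathbf S(\tau;v))$ — Lemmas~\ref{Phi_prop1_lem} and~\ref{Phi_prop2_lem} refined by isolating the slowest Fourier modes $|k|=\kappa$ of $v$ in the heat kernel — is indispensable; the crude comparison above already rules out $\omega_0\in M_-$, and the trichotomy then supplies $\omega_0\in M_g$, so that a posteriori $\|\omega(t;\omega_0)\|_0\to\infty$. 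The remaining ingredients (continuity of $g_v$ up to $t=0$, injectivity of $\Gamma$ along rays, and the elementary identity $b(v)=g_v(\infty)$ on $B_{+,\infty}$) are routine.
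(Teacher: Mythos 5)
Your proof is correct and follows exactly the route that the paper's Section on the geometric structure of the phase space is built to support: the paper itself gives no argument here (it defers to \cite{F4}), but the homogeneity dictionary $D(t;\rho v)=1-\rho\,g_v(t)$, the identification of $V^0_\pm$ and of $\Gamma(B_{+,f})$, $\Gamma(B_{+,\infty})$ with the sign and attainment behaviour of $\inf_{t\ge 0}D$, and the analysis of the first zero of the denominator in \eqref{exp_sol} are precisely what the definitions of $b(v)$, $\Gamma$, $B_{+,f}$, $B_{+,\infty}$ are designed for, and your reading of $B_{+,f}$, $B_{+,\infty}$ in the statement as their $\Gamma$-images is the right one. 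The one point to flag is Step (iii): you place $\Gamma(B_{+,\infty})$ in $M_g$ by elimination through the trichotomy $M_-\cup M_+\cup M_g=V^0$ rather than by showing $\|\omega(t;\omega_0)\|_0\to\infty$ directly; since that trichotomy is itself quoted from the same source \cite{F4}, a fully self-contained proof would still owe the estimate that $D(t)=b(v)^{-1}\int_t^\infty\Phi(\mathbf S(\tau;v))\,d\tau$ decays strictly faster than $\|\mathbf S(t;\omega_0)\|_0$, not merely no slower, which your lowest-Fourier-mode comparison does not yet give.
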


%%%%%%%%%%%%%%%%%%%%%%%%%%%%%%%%%%%%%%%%%%%%%%%%%%%%%%%%%%%%%%%%%%%%%%%%%%%%%%%%%
\section{Stabilization of solution for NPE by starting control}\label{s2}

%%%%%%%%%%%%%%%%%%%%%%%%%%%%%%%%%%%%%%%%%%%%%%%%%%%%%%%%%%%%%%%%%%%%%%%%%%%%%%%%%%%%

\subsection{Formulation of the main result on stabilization}

We consider semilinear parabolic equations \eqref{npe}:
\begin{equation}\label{NPE_1}
    \partial_t{y}(t,{x})-\Delta {y}(t,{x})-\Phi ({y}){y}=0
\end{equation}
 with periodic boundary condition
\begin{equation}\label{npe_bound}
   {y}(t,...x_i+2\pi,... )={y}(t,{x}),\, i=1,2,3
\end{equation}
and initial condition
\begin{equation}\label{npe_in_contr}
    {y}(t,{x})|_{t=0}={y}_0({x})+{u}_0({x}).
\end{equation}
Here $\Phi$ is the functional defined in \eqref{Phi_def}, ${y}_0(
{x})\in V^0(\mathbb{T}^3)$ is an arbitrary given initial datum and
${u}_0({x})\in V^0(\mathbb{T}^3)$ is a control. Phase space $V^0$
is defined in \eqref{phase_space}.

We assume that ${u}_0({x})$ is supported on
$[a_1,b_1]\times[a_2,b_2]\times[a_3,b_3]\subset \mathbb{T}^3 =
(\mathbb{R}/2\pi\mathbb{Z})^3$:
\begin{equation}\label{supp_v}
    \mbox{supp}\,{u}_0\subset [a_1,b_1]\times[a_2,b_2]\times[a_3,b_3]
\end{equation}

Our goal is to find for every given ${y}_0(x)\in
V^0(\mathbb{T}^3)$ a control ${u}_0\in V^0(\mathbb{T}^3)$
satisfying \eqref{supp_v} such that there exists unique solution
${y}(t,{x};{y}_0+{u}_0)$ of \eqref{NPE_1}-\eqref{npe_in_contr} and
this solution satisfies the estimate
\begin{equation}\label{stab_est}
    \| {y}(t,\cdot ;{y}_0+{u}_0)\|_0 \le \alpha \|{y}_0+{u}_0\|_0\mathrm{e}^{-t}\quad \forall t>0
\end{equation}
with a certain $\alpha >1$.

By Definition \ref{M-} of the set of stability $M_-$ inclusion
$y_0\in M_-$ implies estimate \eqref{stab_est} with $u_0=0$.
Therefore the formulated problem is reach of content only if
$y_0\in V^0\setminus M_-=M_+\cup M_g$. Note, that without loss of generality, the last inclusion
can be changed for $y_0\in V^{1/2}\setminus M_-$. Indeed, in virtue of explicit formula \eqref{exp_sol} the solution $y_0(t,\cdot;y_0)$ of NPE belongs to $C^{\infty}(\mathbb{T}^3)$ for arbitrary small $t>0$. Hence, if $y_0\in V^0$, we can shift on small $t$, take $y(t,\cdot;y_0)$ {as} initial condition and apply the stabilization construction to it.

The following main theorem holds:

\begin{theorem}\label{stabiliz_theorem}
Let ${y}_0\in V^{1/2}\setminus M_-$ be given. Then there exists a
control ${u}_0\in V^0\cap (L_\infty (\mathbb{T}^3))^3$ satisfying
\eqref{supp_v} such that there exists a unique solution
${y}(t,{x};{y}_0+{u}_0)$ of \eqref{NPE_1}--\eqref{npe_in_contr},
and this solution satisfies bound \eqref{stab_est} with a certain
$\alpha >1$.
\end{theorem}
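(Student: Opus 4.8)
The plan is to use the explicit solution formula \eqref{exp_sol} and to build the control in the form $u_0 = -\lambda u$, where $u$ is a fixed (universal) divergence-free function supported in the parallelepiped $[a_1,b_1]\times[a_2,b_2]\times[a_3,b_3]$, and $\lambda>0$ is a large parameter to be chosen depending on $y_0$. By Lemma \ref{exp_sol_lem} the solution with initial datum $y_0+u_0 = y_0-\lambda u$ is
\[
\omega(t,x;y_0-\lambda u)=\frac{\mathbf{S}(t,x;y_0-\lambda u)}{1-\int_0^t\Phi(\mathbf{S}(\tau,\cdot;y_0-\lambda u))\,d\tau},
\]
and since $\mathbf S$ is linear in the initial datum, $\mathbf S(t,\cdot;y_0-\lambda u)=\mathbf S(t,\cdot;y_0)-\lambda\mathbf S(t,\cdot;u)$. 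The numerator decays like $e^{-t}$ in $L_2$ (Stokes semigroup on $V^0$, zero mean), so the whole game is to show that for a suitable choice of $u$ and all sufficiently large $\lambda$ the denominator
\[
D_\lambda(t)=1-\int_0^t\Phi\big(\mathbf S(\tau,\cdot;y_0)-\lambda\mathbf S(\tau,\cdot;u)\big)\,d\tau
\]
stays bounded away from $0$ from below, uniformly in $t\ge 0$, with a lower bound that grows with $\lambda$ fast enough to absorb $\|y_0-\lambda u\|_0$ in \eqref{stab_est}.

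The key step is to expand the integrand using the definition \eqref{Phi_def} of $\Phi$ and the bilinearity of $(w,\nabla)\operatorname{curl}^{-1}w$. Writing $w_\lambda(\tau)=\mathbf S(\tau;y_0)-\lambda\mathbf S(\tau;u)$, the quadratic form in the numerator of $\Phi(w_\lambda)$ is
\[
\lambda^2\big(\mathbf S(\tau;u)\cdot\nabla\big)\operatorname{curl}^{-1}\mathbf S(\tau;u)\cdot\mathbf S(\tau;u)\ \text{(up to symmetrization)}\ -\ \lambda\,(\text{cross terms})\ +\ O(1),
\]
and the denominator $\|w_\lambda(\tau)\|_0^2 = \lambda^2\|\mathbf S(\tau;u)\|_0^2 - 2\lambda(\mathbf S(\tau;u),\mathbf S(\tau;y_0))_0+\|\mathbf S(\tau;y_0)\|_0^2$. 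The decisive input is estimate \eqref{main_est_b0}: for the universal function $u$ one has
\[
\int_{\mathbb T^3}\big((\mathbf S(t,x;u),\nabla)\operatorname{curl}^{-1}\mathbf S(t,x;u),\mathbf S(t,x;u)\big)\,dx>3\beta\,e^{-18t}\qquad\forall t\ge 0,
\]
so the leading $\lambda^2$-term of $\Phi(w_\lambda(\tau))$ — after dividing by $\|w_\lambda(\tau)\|_0^2\sim\lambda^2\|\mathbf S(\tau;u)\|_0^2$ — is a strictly negative quantity, hence $-\int_0^t\Phi(w_\lambda(\tau))\,d\tau$ contributes a positive term to $D_\lambda(t)$. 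One then has to control the cross term and the pure-$y_0$ term: the cross term is $O(\lambda)$ but by Cauchy--Schwarz and the explicit exponential structure of $\mathbf S(\tau;y_0)$, $\mathbf S(\tau;u)$ its time integral is bounded by $C\lambda\|y_0\|_{-\beta}$ (using Lemma \ref{Phi_prop2_lem}-type estimates and the fact that $y_0\in V^{1/2}$), while the pure-$y_0$ term is bounded by $c_1\|y_0\|_{-\beta}$ again by Lemma \ref{Phi_prop2_lem}. Balancing $\lambda^2$-growth against $\lambda$-growth shows that for $\lambda$ large (depending on $\|y_0\|_{1/2}$ and on the fixed $u$) we get $D_\lambda(t)\ge c\lambda^2$ for all $t\ge 0$, in particular $D_\lambda(t)\ge 1$, and $\omega$ is global.

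Once $D_\lambda(t)$ is bounded below, \eqref{stab_est} follows: $\|\omega(t,\cdot;y_0-\lambda u)\|_0\le \|w_\lambda(t)\|_0/D_\lambda(t)\le (\lambda\|\mathbf S(t;u)\|_0+\|\mathbf S(t;y_0)\|_0)/D_\lambda(t)\le C\lambda e^{-t}/(c\lambda^2)\cdot\lambda = \alpha\|y_0-\lambda u\|_0 e^{-t}$ after noting $\|y_0-\lambda u\|_0\sim\lambda\|u\|_0$ for large $\lambda$; one must choose $\alpha>1$ and check all constants are uniform in $t$. The regularity claim $u_0\in V^0\cap(L_\infty(\mathbb T^3))^3$ and $\operatorname{supp}u_0\subset[a_1,b_1]\times[a_2,b_2]\times[a_3,b_3]$ is automatic since $u$ is chosen once and for all to be a smooth divergence-free bump in that parallelepiped (the construction of such a $u$ satisfying \eqref{main_est_b0} is the content of \cite{F5,FSh,FSh16}). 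The main obstacle — and the whole point of the paper relative to \cite{FSh16} — is verifying that dropping the $Fy_0$ term still leaves the denominator controlled; concretely, that the cross terms between $\mathbf S(\tau;y_0)$ and $\mathbf S(\tau;u)$ in $\int_0^t\Phi(w_\lambda)\,d\tau$ are genuinely lower order ($O(\lambda)$, and with a time-integrable profile) rather than spoiling the sign of the $\lambda^2$ term, which is where the hypothesis $y_0\in V^{1/2}$ (rather than merely $V^0$) and the pointwise-in-$t$ bound \eqref{main_est_b0} are used.
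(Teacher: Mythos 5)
Your overall architecture --- the control $u_0=-\lambda u$ with a universal $u$ supported in the prescribed parallelepiped, the explicit formula \eqref{exp_sol}, and an expansion of $\Phi(\mathbf S(\tau;y_0-\lambda u))$ in which \eqref{main_est_b} controls the leading term and the cross terms are treated as lower order --- is indeed the paper's strategy in outline. But the execution has two genuine gaps. The first is a degree miscount: the numerator of $\Phi$ in \eqref{Phi_def} is \emph{trilinear} in $\omega$, not quadratic. With $w_\lambda(\tau)=\mathbf S(\tau;y_0)-\lambda\mathbf S(\tau;u)$ the numerator expands as $-\lambda^3\Psi(\mathbf S(\tau;u))+O(\lambda^2)$ while the denominator is $\lambda^2\|\mathbf S(\tau;u)\|_0^2+O(\lambda)$, so $\Phi(w_\lambda)\sim-\lambda\,\Psi(\mathbf S(\tau;u))/\|\mathbf S(\tau;u)\|_0^2$ and your $D_\lambda(t)$ grows at most \emph{linearly} in $\lambda$; the claimed bound $D_\lambda(t)\ge c\lambda^2$ is false. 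Linear growth would still suffice, because the resulting estimate $\|y(t)\|_0\le\|v\|\mathrm{e}^{-t}\bigl(1+c\|v\|(1-\mathrm{e}^{-16t})\bigr)^{-1}$ (this is \eqref{sol_est1}, which the paper derives via a differential inequality for $z(t)=1/\|y(t)\|$ rather than by dividing numerator by denominator) is bounded for $t$ away from $0$ by a quantity independent of $\lambda$ --- but you never arrive at that form.

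The second gap is more serious: the uniformity in $t\ge0$ you assert cannot be obtained from this comparison. The positive leading term is $3\beta\lambda^3\mathrm{e}^{-18\tau}$, while the cross terms are only $O(\lambda^2\mathrm{e}^{-3\tau})$; they decay far \emph{slower}, so for any fixed $\lambda$ the sign control on $-\Psi(w_\lambda(\tau))$ is lost once $\tau\gtrsim\tfrac1{15}\ln\lambda$. This is precisely why the paper proves the key inequality \eqref{psi_est} only on a finite interval $(0,T)$ with $\lambda>\lambda_0(T)\sim \mathrm{e}^{15T}$, and why the proof must be two-staged: first show (Theorem \ref{sol_est_th}, Corollary \ref{cor_fin}) that the solution enters the small ball $B_{r_0}$ at a time $T$ that is \emph{independent of} $\lambda$ --- the independence coming from the $\lambda$-free majorant $16\mathrm{e}^{-t}/(\beta(1-\mathrm{e}^{-16t}))$ --- and then invoke the separate small-data exponential decay of Lemma \ref{sol_est_th0} for $t>T$. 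Your proposal has no counterpart of this second stage, and without it the claimed global bound \eqref{stab_est} does not follow. A smaller but real omission: near $t=0$ one has only $\|\mathbf S(t;u)\|_{V^{1/2}}\lesssim t^{-1/4}$ since $u\in V^0$, so the $\lambda^2$ cross term needs a separate treatment on a short initial interval; the paper handles $(0,t_0)$ using $u\in(L_\infty(\mathbb{T}^3))^3$ and the maximum principle, a step you would also need.
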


The main steps of this theorem's proof are indicated below.
%%%%%%%%%%%%%%%%%%%%%%%%%%%%%%%%%%%%%%%%%%%%%%%%%%%%%%%%%%%%%%%%%%%%%%%%%%%%%%%%%%%%%%%%%%

\subsection{Formulation of the main preliminary result}

To rewrite condition \eqref{supp_v} in more convenient form, let us
first perform the change of variables in
\eqref{NPE_1}-\eqref{npe_in_contr}:
$$\tilde{x}_i=x_i - \frac{a_i+b_i}{2}, i=1,2,3$$
and denote

\begin{equation}\label{newvar}
\begin{split}
&\tilde {{y}}(t,\tilde {{x}})= {{y}}\left(t,\tilde {x}_1+\frac{a_1+b_1}{2},\tilde {x}_2+\frac{a_2+b_2}{2},\tilde {x_3}+\frac{a_3+b_3}{2}\right),\\
&\tilde {{y}}_0(\tilde {{x}})= {{y}}_0\left(\tilde {x}_1+\frac{a_1+b_1}{2},\tilde {x}_2+\frac{a_2+b_2}{2},\tilde {x}_3+\frac{a_3+b_3}{2}\right),\\
&\tilde {{u}_0}(\tilde {{x}})= {{u}_0}\left(\tilde
{x}_1+\frac{a_1+b_1}{2},\tilde {x}_2+\frac{a_2+b_2}{2},\tilde
{x}_3+\frac{a_3+b_3}{2}\right).
\end{split}
\end{equation}

Then substituting \eqref{newvar} into relations \eqref{NPE_1}-\eqref{npe_in_contr}, \eqref{stab_est} and omitting the tilde sign leaves these relations unchanged, while inclusion \eqref{supp_v} transforms into
\begin{equation}\label{supp_v_p}
\mbox{supp}\,{u}_0\subset
[-\rho_1,\rho_1]\times[-\rho_2,\rho_2]\times[-\rho_3,\rho_3]
\end{equation}
where $\rho_i = \dfrac{b_i-a_i}{2}\in(0,\pi)$, $i=1,2,3$.

Below we consider stabilization problem \eqref{NPE_1}-\eqref{npe_in_contr}, \eqref{stab_est}
with condition \eqref{supp_v_p} instead of \eqref{supp_v}.

We look for a starting control $u_0(x)$ in a form
\begin{equation}\label{StartContr}
  u_0(x)=-\lambda u(x)
\end{equation}
where the constant $\lambda >0$ will be
defined later and the main component $u(x)$ is defined as follows.
For given $\rho_1,\rho_2,\rho_3\in(0,\pi)$ we choose $p\in \mathbb
N$ such that
\begin{equation}\label{p_def}
\frac{\pi}{p}\le \rho_i, \,i=1,2,3,
\end{equation}
and denote by $\chi_{\frac{\pi}{p}}(\alpha )$ the characteristic
function of interval $(-\frac{\pi}{p},\frac{\pi}{p})$:
\begin{equation}\label{def_chi}
\chi_{\frac{\pi}{p}}(\alpha) = \left\{\begin{array}{rl}
        1, & |\alpha | \leq \frac{\pi}{p},\\
        0, & \frac{\pi}{p}<|\alpha | \leq \pi.
            \end{array}\right.
\end{equation}
Then we set
\begin{equation}\label{u_def}
{u}({x})=\frac{\tilde u(x)}{\| \tilde u\|_0}\quad \mbox{with}\;
\tilde u(x) =
\operatorname{curl}\operatorname{curl}(\chi_{\frac{\pi}{p}}(x_1)\chi_{\frac{\pi}{p}}(x_2)\chi_{\frac{\pi}{p}}(x_3)w(px_1,px_2,px_3),0,0),
\end{equation}
where
\begin{equation}\label{def_w}
w(x_1,x_2,x_3) =\sum_{\genfrac{}{}{0pt}{}{i,j,k=1}{i<j,k\neq i,j}}^{3}a_k(1+\cos x_k)(\sin x_i+\frac{1}{2}\sin 2x_i)(\sin x_j+\frac{1}{2}\sin 2x_j),
\end{equation}
$a_1,a_2,a_3\in \mathbb{R}$.

\begin{proposition}\label{prop_u} The vector field $u(x)$ defined in
\eqref{p_def}-\eqref{def_w} possesses the following properties:
\begin{equation}\label{main_est_a}
   {u}({x})\in V^0(\mathbb{T}^3)\cap(L_\infty (\mathbb{T}^3))^3,\qquad \mbox{supp}\,u\, \subset ([-\rho
   ,\rho])^3, \qquad \| u\|_0=1
\end{equation}
\end{proposition}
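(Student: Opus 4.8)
The plan is to verify the three asserted properties of $u(x)$ directly from the explicit construction in \eqref{p_def}--\eqref{def_w}, treating them in the order: support, divergence-free and zero-mean (the $V^0$ membership), boundedness, and finally the normalization $\|u\|_0 = 1$.

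First I would establish the support property. The scalar function $\chi_{\pi/p}(x_1)\chi_{\pi/p}(x_2)\chi_{\pi/p}(x_3)$ vanishes outside the cube $([-\pi/p,\pi/p])^3$ by \eqref{def_chi}, hence the vector field $(\chi_{\pi/p}(x_1)\chi_{\pi/p}(x_2)\chi_{\pi/p}(x_3)w(px_1,px_2,px_3),0,0)$ is supported in that cube. Since $\operatorname{curl}$ is a local (first-order differential) operator, applying $\operatorname{curl}\operatorname{curl}$ cannot enlarge the support, so $\operatorname{supp}\tilde u \subset ([-\pi/p,\pi/p])^3 \subset ([-\rho,\rho])^3$ by \eqref{p_def} (here $\rho = (\rho_1,\rho_2,\rho_3)$, or one takes $\rho = \min_i \rho_i$, matching the notation of \eqref{main_est_a}). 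Dividing by the constant $\|\tilde u\|_0$ does not change the support, giving $\operatorname{supp} u \subset ([-\rho,\rho])^3$. One subtlety worth a remark: for $\tilde u$ to be a genuine (periodic) $H^{-1}$ or $L_2$ vector field one needs the bracketed field to be regular enough that two curls land in $L_2$; this is why the profile $w$ in \eqref{def_w} is built from functions like $(1+\cos x_k)$ and $(\sin x_i + \tfrac12\sin 2x_i)$ that vanish to appropriate order at the faces $x_k = \pm\pi$, so that the cut-off product is continuous (indeed $C^1$) across the cube boundary and across the periodicity identifications — I would check that the relevant one-variable factors and their first derivatives vanish at $\pm\pi/p$ after the rescaling, which is the one place a short computation is genuinely needed.

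Next, the algebraic structure properties. Divergence-freeness is automatic: $\operatorname{div}\operatorname{curl}(\,\cdot\,) \equiv 0$ for any vector field, so $\operatorname{div}\tilde u = \operatorname{div}\operatorname{curl}(\operatorname{curl}(\dots)) = 0$, hence $\operatorname{div} u = 0$. The zero-mean condition $\int_{\mathbb{T}^3} u\,dx = 0$ follows because $\tilde u = \operatorname{curl}\operatorname{curl}(\cdot)$ is a curl, and the integral of any curl of a periodic field over the torus vanishes (equivalently, its zeroth Fourier coefficient is $\mathrm{i}k\times(\cdot)|_{k=0} = 0$). For $u \in (L_\infty(\mathbb{T}^3))^3$, I would expand $\operatorname{curl}\operatorname{curl}(W,0,0)$ componentwise, where $W(x) = \chi(x_1)\chi(x_2)\chi(x_3)w(px_1,px_2,px_3)$ with $\chi = \chi_{\pi/p}$; using $\operatorname{curl}\operatorname{curl} = \nabla\operatorname{div} - \Delta$, one gets $\tilde u = (\partial_1^2 W - \Delta W,\ \partial_1\partial_2 W,\ \partial_1\partial_3 W) = (-\partial_2^2 W - \partial_3^2 W,\ \partial_1\partial_2 W,\ \partial_1\partial_3 W)$, i.e. $\tilde u$ is a sum of pure second derivatives of $W$. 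Away from the cube faces $W$ is smooth with bounded derivatives (the $\chi$ factors are locally constant there); on the faces the vanishing of $w$ and $\nabla w$ ensures the second derivatives have at worst jump discontinuities but remain bounded. Hence $\tilde u \in (L_\infty)^3$ and, since $\|\tilde u\|_0 > 0$ (it is not identically zero for generic $a_1,a_2,a_3$, as one sees from the Fourier content of $w$), $u = \tilde u/\|\tilde u\|_0 \in (L_\infty)^3$. Finally, $\|u\|_0 = \|\tilde u\|_0/\|\tilde u\|_0 = 1$ is immediate from the definition \eqref{u_def}.

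The main obstacle — and the only part that is not a one-line observation — is the boundedness claim together with the tacit requirement that $\tilde u$ be a well-defined element of $L_2(\mathbb{T}^3)^3$ in the first place. This hinges entirely on the regularity of the cut-off product $W$ at the boundary of the cube $([-\pi/p,\pi/p])^3$: since $W$ involves the discontinuous characteristic functions $\chi_{\pi/p}$, applying two derivatives naively would produce derivatives of $\delta$-functions on the faces. The construction is rigged so that $w(px_1,px_2,px_3)$ vanishes there to high enough order to kill these boundary terms; concretely, each one-dimensional factor of $w$ evaluated at the argument corresponding to a face must vanish to first order (value and first derivative zero) so that $W \in C^1$ and $\partial^2 W \in L_\infty$. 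I would carry out precisely this check — evaluating $(1+\cos\alpha)$, $(\sin\alpha + \tfrac12\sin 2\alpha)$ and their first derivatives at $\alpha = \pm\pi$ (the images of the faces $x_i = \pm\pi/p$ under the scaling $x_i \mapsto px_i$) — to confirm the needed vanishing, after which all three properties in \eqref{main_est_a} follow as above.
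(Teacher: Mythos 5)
Your proof is correct and follows essentially the same route as the paper's: both reduce to the explicit componentwise formula for $\operatorname{curl}\operatorname{curl}(W,0,0)$ and identify the vanishing of $w$ and its first derivatives at the faces $x_j=\pm\pi$ as the key fact that kills the singular boundary contributions, after which membership in $V^0\cap(L_\infty)^3$, the support inclusion, and $\|u\|_0=1$ are immediate. Your version is, if anything, a little more explicit than the paper's (using $\operatorname{div}\operatorname{curl}=0$ for divergence-freeness and spelling out the zero-mean condition), but the substance is the same.
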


\begin{proof}
For each $j=1,2,3$ function $w(x_1,x_2,x_3)$ defined in
\eqref{def_w}
 and $\partial_jw$ equal to zero at $x_j=\pm \pi$. That is why
using notations $\mathbf{\chi}_{\frac{\pi}{p}}(x)=
\chi_{\frac{\pi}{p}}(x_1)\chi_{\frac{\pi}{p}}(x_2)\chi_{\frac{\pi}{p}}(x_3),\,
w(px)=w(px_1,px_2,px_3)$ we get
\begin{equation}\label{def_cw}
\mbox{curl}(\mathbf{\chi}_{\frac{\pi}{p}}(x)(w(px),0,0))=p\mathbf
{\chi}_{\frac{\pi}{p}}(x)(0,\partial_3w(px),-\partial_2w(px))\in
(H^1(\mathbb{T}^3))^3
\end{equation}
\begin{equation}\label{def_ccw}
  u(x)=p^2{\mathbf
{\chi}}_{\frac{\pi}{p}}(x)(-\partial_{22}w(px)-\partial_{33}w(px),\partial_{12}w(px),\partial_{13}w(px))\in
(H^0(\mathbb{T}^3))^3
\end{equation}
Applying to vector field \eqref{def_ccw} operator \mbox{div} and
performing  direct calculations in the space of distributions we
get that $\mbox{div}\, u(x)=0$. Hence, $u(x)\in
V^0(\mathbb{T}^3)$. The other relations in \eqref{main_est_a} are
evident.
\end{proof}

Let consider the boundary value problem for the system of three
heat equations
\begin{equation}\label{heat_eq_u}
    \partial_t{\mathbf{S}}(t,{x};u)-\Delta {\mathbf{S}}(t,{x};u)=0,\qquad  \mathbf{S}(t,{x})|_{t=0}={u}({x})
\end{equation}
with periodic boundary condition. (Since by Proposition
\ref{prop_u} $\mbox{div}\,u(x)=0$ we get that $\mbox{div}\,
S(t,x;u)=0$ for $t>0$, and therefore system \eqref{heat_eq_u} in
fact is equal to the Stokes system.)

The following theorem is true:
\begin{theorem}\label{main_est_th}
For each $\rho :=\pi /p \in (0,\pi )$ the function ${u}({x})$
defined in \eqref{u_def} by a natural number $p$ satisfying
\eqref{p_def} and characteristic function \eqref{def_chi},
satisfies the estimate:
\begin{equation}\label{main_est_b}
\int_{T^3}((\mathbf{S}(t,{x};{u}),\nabla)\operatorname{curl}^{-1}\mathbf{S}(t,{x};{u}),\mathbf{S}(t,{x};{u}))dx>3\beta
\mathrm{e}^{-18t} \qquad \forall \; t\ge 0
\end{equation}
with a positive constant $\beta$.
\end{theorem}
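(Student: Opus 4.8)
The plan is to reduce estimate \eqref{main_est_b} to an explicit computation involving Fourier coefficients. The left-hand side of \eqref{main_est_b} is, up to constants, the numerator of $\Phi(\mathbf S(t,\cdot;u))\cdot\|\mathbf S(t,\cdot;u)\|_0^2$, so I would begin by expanding $u$ in a Fourier series: by construction \eqref{u_def}--\eqref{def_w}, the function $\tilde u$ is a finite trigonometric polynomial in $(px_1,px_2,px_3)$ multiplied by the characteristic function $\mathbf\chi_{\pi/p}$, so its Fourier spectrum is supported on a lattice that, after the scaling $x\mapsto px$, concentrates on frequencies of the form $(k_1,k_2,k_3)$ with $|k_i|\le 2$; the crucial point is that the lowest nonzero frequencies present are of modulus $|k|^2$ around $p^2\cdot(\text{small integer})$, and the slowest-decaying Fourier mode of $\mathbf S(t,\cdot;u)=\sum_k \hat u(k)e^{\mathrm i(k,x)}e^{-|k|^2t}$ decays like $e^{-|k_{\min}|^2 t}$. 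One then checks that $|k_{\min}|^2$ can be taken $\le 18$ (with the factor $3$ and the exponent $18$ in \eqref{main_est_b} reflecting the three coordinate directions and the worst-case $|k|^2=9+9=18$ among the relevant low modes), which explains the shape of the bound.

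Next I would substitute this Fourier expansion into the trilinear form. Writing $\mathbf S=\mathbf S(t,\cdot;u)$, we have
\begin{equation}\label{plan_trilinear}
\int_{\mathbb T^3}((\mathbf S,\nabla)\operatorname{curl}^{-1}\mathbf S,\mathbf S)\,dx
=\mathrm i\!\!\sum_{k+\ell+m=0}\!\!\Big(\hat u(\ell)\cdot\frac{m\times\hat u(m)}{|m|^2}\Big)(\hat u(k)\cdot\text{(factor)})\,
\mathrm e^{-(|k|^2+|\ell|^2+|m|^2)t},
\end{equation}
where the sum is over the finitely many lattice triples with $k+\ell+m=0$ lying in the spectrum of $u$. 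Grouping terms by the value of the exponent $|k|^2+|\ell|^2+|m|^2$, the leading contribution as $t\to\infty$ comes from the triples minimizing this sum; the coefficients $a_1,a_2,a_3$ in \eqref{def_w} are exactly the free parameters one tunes so that the coefficient of the leading exponential is a fixed strictly positive number $3\beta$. I would then show that, for a suitable choice of $a_1,a_2,a_3$, the leading-order coefficient is positive, and that the remaining (faster-decaying) terms can be absorbed: for large $t$ this is immediate, and for $t$ in a compact interval $[0,T_0]$ one uses continuity together with the fact that \eqref{main_est_b} at $t=0$ reduces to $\int_{\mathbb T^3}((u,\nabla)\operatorname{curl}^{-1}u,u)\,dx>3\beta$, a single explicit integral that can be made positive by the same tuning of $a_1,a_2,a_3$. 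Thus one fixes $a_1,a_2,a_3$ first so that the $t=0$ value and the leading asymptotic coefficient are both positive, then picks $\beta>0$ small enough that \eqref{main_est_b} holds on all of $[0,\infty)$.

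The main obstacle is the bookkeeping in \eqref{plan_trilinear}: one must identify precisely which lattice triples $k+\ell+m=0$ occur in the spectrum of $u$, evaluate the vector products $m\times\hat u(m)$ and the contractions against $\hat u(k)$, and verify that after summing over all triples with the minimal exponent the resulting expression in $a_1,a_2,a_3$ is not identically zero and can be made positive. This is precisely the computation that, as the excerpt notes, was carried out in \cite{F5}, \cite{FSh}, \cite{FSh16}; accordingly, I expect the proof here to proceed by reducing \eqref{main_est_b} to the already-established estimate \eqref{main_est_b0} (they are literally the same inequality), i.e. to invoke the prior work for the hard Fourier-analytic core and to supply only the verification that the present normalization of $u$ in \eqref{u_def}--\eqref{def_w} — in particular the scaling by $p$ and the cutoff $\mathbf\chi_{\pi/p}$ — matches the hypotheses under which that estimate was proved. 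The one genuinely new point to check is that introducing the cutoff $\mathbf\chi_{\pi/p}$ and the dilation $w(px)$ (needed to enforce the support condition $\operatorname{supp}u\subset([-\rho,\rho])^3$) does not destroy the positivity: here I would exploit that $\operatorname{curl}\operatorname{curl}$ applied to a compactly supported field, followed by the heat semigroup, commutes nicely with the $p$-dilation so that the trilinear form scales homogeneously, reducing everything to the case $p=1$ already handled.
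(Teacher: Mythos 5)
Your proposal lands where the paper does: the estimate \eqref{main_est_b} is not reproved here at all, but is imported wholesale from \cite{FSh16} (it is indeed literally \eqref{main_est_b0}), and the only content the paper adds is the short Remark following the theorem. However, you misidentify what that remaining check is. The cutoff $\chi_{\pi/p}$ and the dilation $w(px)$ are already built into the function $\tilde u$ for which \cite{FSh16} proves the estimate --- there is nothing new to verify about them, and no ``reduction to $p=1$'' is performed or needed. The one genuinely new point is that \eqref{u_def} normalizes $u=\tilde u/\|\tilde u\|_0$, and the paper disposes of this by the degree-$3$ homogeneity of the trilinear form: dividing \eqref{main_est_b} for $\tilde u$ by $\|\tilde u\|_0^3$ and renaming $\beta/\|\tilde u\|_0^3$ as $\beta$ gives the stated inequality with $\|u\|_0=1$. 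Your proposal never isolates this (trivial but necessary) step. Separately, the Fourier sketch you offer as a stand-in for the core computation contains a real error: since $\chi_{\pi/p}$ is only a characteristic function, $\tilde u$ is \emph{not} a finite trigonometric polynomial and its spectrum is infinite, so the sum in your expansion of the trilinear form is not over finitely many triples, and the ``leading exponent'' bookkeeping you describe would not close as stated; this is precisely why the estimate occupies the bulk of \cite{F5}, \cite{FSh}, \cite{FSh16} rather than a page of lattice arithmetic. As a report on what the present paper does, your final paragraph is essentially right; as a self-contained proof plan, the computational core is not viable in the form you describe.
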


The proof of Theorem \ref{main_est_th} was given in \cite{FSh16}. This theorem is the most complicated part of Theorem's \ref{stabiliz_theorem} proof.

 \begin{remark}
 Actually, estimate {\eqref{main_est_b}} in {\cite{FSh16}} was proved not for the function $u(x)$, defined in {\eqref{u_def}}, but for the function $\tilde{u}(x)$, defined in the same formula {\eqref{u_def}}. But, dividing both parts of {\eqref{main_est_b}} for $\tilde{u}(x)$ on $\|\tilde{u}\|^3$, applying formula {\eqref{u_def}}, expressing $u(x)$ via $\tilde{u}(x)$, to the left hand side of the resulting inequality and denoting $\beta/\|\tilde{u}\|^3$ as $\beta$, we get that estimate {\eqref{main_est_b}} is true for $u(x)$ as well, with a different constant $\beta$. Everywhere below we use estimate {\eqref{main_est_b}} for function $u(x)$, defined in {\eqref{u_def}}, i.e. we hold $\|u\|:=\|u\|_{V^0(\mathbb{T}^3)}=1$.
\end{remark}

\subsection{ Proof of the stabilization result: the first step}\label{s2.41}

In this subsection we begin to prove Theorem
\ref{stabiliz_theorem} using Theorem \ref{main_est_th}. We take
control \eqref{StartContr} as a desired one where vector-function
$u(x)$ is defined in \eqref{u_def}, \eqref{def_w}, and $\lambda
\gg 1$ is a parameter.

In fact, to prove the stabilization result, it is enough to show
that at some instant $t_0$ the solution $y(t,x;y_0+u_0)$ of the
stabilization problem \eqref{NPE_1}, \eqref{npe_bound},
\eqref{npe_in_contr} belongs to a small enough neighborhood of
zero. This is implied by Lemma \ref{sol_est_th0}.

So, to prove the desired result we need to show, that for every
$T>0$ we can choose parameter $\lambda$ in such way that the
function
\begin{equation}\label{denominator}
    1-\int_0^t\Phi (\mathbf{S}(\tau ,\cdot ;y_0-\lambda u))d\tau
\end{equation}
for each $t\in(0,T)$ is bounded from below by a positive constant
independent of $t$, and that is why the solution
$y(t,x;y_0-\lambda u)$ of the stabilization problem \eqref{NPE_1},
\eqref{npe_bound}, \eqref{npe_in_contr} satisfies estimate
$\|y(T,\cdot;y_0-\lambda u)\|\leq r_0:= \frac{1}{2c_1}$ (see Lemma
\ref{sol_est_th0}).

We begin with the proof of one important corollary of Theorem
\ref{main_est_th}. Denote
\begin{equation}\label{Psi}
  \Psi(y_1,y_2,y_3)=\int_{T^3}((y_1,\nabla)\mbox{curl}^{-1}y_2,y_3)dx,
  \quad
  \Psi(y)=\Psi(y,y,y)
\end{equation}

\begin{lemma}\label{denom_est_lem1}
Let $u(x)$ be the control function from Theorem \ref{main_est_th},
$\mathbf{S}(t ,\cdot ;y_0-\lambda u)$ be the solution of the
Stokes system \eqref{heat_eq_u} with initial condition
 $\mathbf{S}(t,x;y_0-\lambda u)|_{t=0}=y_0-\lambda u$. Then
 for every $T>0$ there exists $\lambda\gg 1$, such that for every $t\in(0,T)$
 the following estimate holds:
\begin{equation}\label{psi_est}
-\Psi(\mathbf{S}(t,\cdot;y_0-\lambda u)) > 2\beta\lambda^3
\mathrm{e}^{-18t},
\end{equation}
where $\beta$ is the constant from \eqref{main_est_b}.
\end{lemma}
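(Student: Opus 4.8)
The plan is to expand the trilinear form $\Psi(\mathbf{S}(t,\cdot;y_0-\lambda u))$ by multilinearity in the single argument and isolate the cubic-in-$\lambda$ term, which is governed by Theorem \ref{main_est_th}. Since the Stokes semigroup $\mathbf{S}(t,\cdot;\cdot)$ is linear in the initial datum, write $\mathbf{S}(t,\cdot;y_0-\lambda u)=\mathbf{S}(t,\cdot;y_0)-\lambda\mathbf{S}(t,\cdot;u)$ and substitute into the symmetric trilinear form $\Psi(y)=\Psi(y,y,y)$ from \eqref{Psi}. This produces four groups of terms, graded by the power of $\lambda$: the term $-\lambda^3\Psi(\mathbf{S}(t,\cdot;u))$; three terms of order $\lambda^2$ of the form $\lambda^2\Psi(\cdot)$ with two slots filled by $\mathbf{S}(t,\cdot;u)$ and one by $\mathbf{S}(t,\cdot;y_0)$; three terms of order $\lambda^1$ with one slot $\mathbf{S}(t,\cdot;u)$ and two slots $\mathbf{S}(t,\cdot;y_0)$; and the $\lambda$-independent term $-\Psi(\mathbf{S}(t,\cdot;y_0))$. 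By Theorem \ref{main_est_th}, the leading term satisfies $-\lambda^3\Psi(\mathbf{S}(t,\cdot;u))>3\beta\lambda^3\mathrm{e}^{-18t}$ for all $t\ge 0$, so it suffices to show that the remaining lower-order terms are dominated by $\beta\lambda^3\mathrm{e}^{-18t}$ uniformly on $(0,T)$ once $\lambda$ is large.

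The key estimate for the lower-order terms is a bound on $|\Psi(y_1,y_2,y_3)|$ in terms of Sobolev norms of the three arguments, of exactly the type underlying Lemma \ref{Phi_prop1_lem}: using $H^{1/2}(\mathbb{T}^3)\subset L_3(\mathbb{T}^3)$ and Hölder's inequality one gets $|\Psi(y_1,y_2,y_3)|\le c\,\|y_1\|_{1/2}\,\|\nabla\operatorname{curl}^{-1}y_2\|_{L_3}\,\|y_3\|_{1/2}\le c\,\|y_1\|_{1/2}\|y_2\|_{1/2}\|y_3\|_{1/2}$. Now I apply this to each lower-order term. For the $\lambda^2$ terms the bound is $c\lambda^2\|\mathbf{S}(t,\cdot;u)\|_{1/2}^2\,\|\mathbf{S}(t,\cdot;y_0)\|_{1/2}$, and similarly $c\lambda\|\mathbf{S}(t,\cdot;u)\|_{1/2}\,\|\mathbf{S}(t,\cdot;y_0)\|_{1/2}^2$ and $c\|\mathbf{S}(t,\cdot;y_0)\|_{1/2}^3$ for the lower orders. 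Since $y_0\in V^{1/2}$ and $u\in V^0\cap(L_\infty)^3$, smoothing of the heat semigroup gives $\|\mathbf{S}(t,\cdot;y_0)\|_{1/2}\le\|y_0\|_{1/2}$ and, for the $u$-factor, one wants the decay $\|\mathbf{S}(t,\cdot;u)\|_{1/2}$ to be controlled on $(0,T)$; since $u\in V^0$, $\|\mathbf{S}(t,\cdot;u)\|_{1/2}\le c t^{-1/2}\|u\|_0$ for $t>0$, which is merely locally integrable, not bounded, so one must be more careful—either use that $\|\mathbf{S}(t,\cdot;u)\|_0\le\|u\|_0=1$ together with the $L_\infty$ bound on $u$, or exploit the exponential-in-$t$ structure so the $\mathrm{e}^{-18t}$ factor on the right can absorb the growth. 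The cleanest route: bound all lower-order terms by $C(T,\|y_0\|_{1/2})(\lambda^2+\lambda+1)$ with $C$ independent of $t\in(0,T)$, using that for $t$ bounded away from $0$ everything is smooth and for $t$ near $0$ the trilinear form is bounded by the $V^{1/2}$ norms which are finite; then choose $\lambda$ large enough that $3\beta\lambda^3\mathrm{e}^{-18T}-C(\lambda^2+\lambda+1)>2\beta\lambda^3\mathrm{e}^{-18T}$, i.e. $\beta\lambda^3\mathrm{e}^{-18T}>C(\lambda^2+\lambda+1)$, which holds for $\lambda\gg 1$ (depending on $T$, $\beta$, $\|y_0\|_{1/2}$).

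The main obstacle I anticipate is making the bound on the lower-order terms genuinely uniform in $t\in(0,T)$ while keeping the dependence on $\lambda$ strictly below cubic: the naive Sobolev bound on $\Psi$ forces $\|\mathbf{S}(t,\cdot;u)\|_{1/2}$, which blows up like $t^{-1/2}$ as $t\to 0^+$ since we only know $u\in V^0$. The resolution is to note that the right-hand side target $\mathrm{e}^{-18t}$ is bounded below by $\mathrm{e}^{-18T}$ on $(0,T)$, so it is enough to beat a $t$-independent constant; and on $(0,T)$ the quantity $\Psi(\mathbf{S}(t,\cdot;y_0-\lambda u))$ minus its cubic part is, after the expansion, a sum of terms each with at least one factor $\mathbf{S}(t,\cdot;y_0)$ whose $V^{1/2}$ norm is $\le\|y_0\|_{1/2}$, times factors $\lambda\mathbf{S}(t,\cdot;u)$ that we control in $V^0$ by $\lambda$ and in $L_\infty$ by $\lambda\|u\|_{L_\infty}$ — using an $L_\infty\times L_2\times L_2$ Hölder split for $\Psi$ rather than the symmetric $L_3$ split avoids the $H^{1/2}$ norm of $\mathbf{S}(t,\cdot;u)$ altogether. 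With that variant, each lower-order term is bounded by $c\lambda^{2}\|u\|_{L_\infty}^2\|y_0\|_{0}$ and so on, uniformly in $t$, and the final choice of $\lambda$ goes through as above.
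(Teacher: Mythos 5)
Your proposal is correct and follows essentially the same route as the paper: expand $\Psi$ by multilinearity in the initial datum, invoke Theorem \ref{main_est_th} for the cubic term, bound the lower-order terms by the trilinear H\"older/pseudodifferential estimate, and resolve the $t\to 0^+$ blow-up of $\|\mathbf{S}(t,\cdot;u)\|_{V^{1/2}}$ via the maximum-principle bound $\|\mathbf{S}(t,\cdot;u)\|_{L_\infty}\le\|u\|_{L_\infty}$. The only cosmetic difference is that the paper splits the time interval at a small $t_0$ (heat-semigroup smoothing into $V^{1/2}$ for $t>t_0$, the $L_\infty$ bound for $t<t_0$) and tracks the exponential factors slightly more sharply, whereas you use the $t$-uniform $L_\infty$-based bound throughout and absorb everything into $\mathrm{e}^{-18T}$; both give the required $\lambda_0(\|y_0\|_{V^{1/2}},T)$.
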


\begin{proof}
According to Theorem \ref{main_est_th},
\begin{equation}\label{psi_u_est}
\Psi(\mathbf{S}(t,\cdot;u)) \geq 3\beta \mathrm{e}^{-18t}, \, \beta >0.
\end{equation}

From definition \eqref{Psi} of $\Psi$,
\begin{equation}\label{psi_est_proof1}
\begin{split}
&-\Psi (\mathbf{S}(t,y_0-\lambda u))=\lambda^3\Psi(\mathbf{S}(t,u))-
\lambda^2(\Psi(\mathbf{S}(t,u),\mathbf{S}(t,u),\mathbf{S}(t,y_0))+\\
&\Psi(\mathbf{S}(t,u),\mathbf{S}(t,y_0),\mathbf{S}(t,u))+
\Psi(\mathbf{S}(t,y_0),\mathbf{S}(t,u),\mathbf{S}(t,u)))+\\
&\lambda(\Psi(\mathbf{S}(t,u),\mathbf{S}(t,y_0),\mathbf{S}(t,y_0))+
\Psi(\mathbf{S}(t,y_0),\mathbf{S}(t,u),\mathbf{S}(t,y_0))+\\
&\Psi(\mathbf{S}(t,y_0),\mathbf{S}(t,y_0),\mathbf{S}(t,u)))-\Psi(\mathbf{S}(t,y_0)).
\end{split}
\end{equation}

In virtue of well-known estimate for pseudo-differential
operators (see \cite{Es}),  Sobolev embedding theorem and definition \eqref{Psi}
we get
\begin{equation}\label{Psi_est1}
\begin{split}
&|\Psi (y_1,y_2,y_3)|\leq\| y_1\|_{L_3(\mathbb{T}^3)}\|\nabla
     \mbox{curl}^{-1}y_2\|_{L_3}\|y_3\|_{L_3}\leq \\
&\tilde{c}_3\|y_1\|_{L_3(\mathbb{T}^3)}\|y_2\|_{L_3}\|y_3\|_{L_3}\le
c_3 \|y_1\|_{V^{1/2}}\|y_2\|_{V^{1/2}}\|y_3\|_{V^{1/2}}
\end{split}
\end{equation}

According to \eqref{psi_u_est}-\eqref{Psi_est1},
\begin{equation}\label{psi_est_proof2}
\begin{split}
&-\Psi(\mathbf{S}(t,y_0-\lambda u))>3\beta \lambda^3\mathrm{e}^{-18t}-\\
&c\left(\lambda^2\|\mathbf{S}(t,u)\|^2_{V^{1/2}}\|
\mathbf{S}(t,y_0)\|_{V^{1/2}}+\lambda
 \|\mathbf{S}(t,u)\|_{V^{1/2}}\| \mathbf{S}(t,y_0)\|^2_{V^{1/2}}+\|\mathbf{S}(t,y_0)\|^3_{V^{1/2}}\right)
\end{split}
\end{equation}
By the same way, using \eqref{psi_u_est}-\eqref{Psi_est1} and
inequality $\|v\|_{L_3(\mathbb{T}^3)}\le c
\|v\|_{L_\infty(\mathbb{T}^3)}$ we get
\begin{equation}\label{psi_est_proof3}
\begin{split}
&-\Psi(\mathbf{S}(t,y_0-\lambda u))>3\beta \lambda^3\mathrm{e}^{-18t}-\\
&c\left(\lambda^2\|\mathbf{S}(t,u)\|^2_{L_{\infty}}\|
\mathbf{S}(t,y_0)\|_{V^{1/2}}+\lambda
 \|\mathbf{S}(t,u)\|_{L_{\infty}}\| \mathbf{S}(t,y_0)\|^2_{V^{1/2}}+\|\mathbf{S}(t,y_0)\|^3_{V^{1/2}}\right).
\end{split}
\end{equation}

Let us show, that for small enough $t_0>0$
\begin{equation}\label{S_u_est1}
\|\mathbf{S}(t,u)\|_{V^{1/2}}\leq\frac{\mathrm{e}^{t_0-1/4}}{\sqrt{2}t_0^{1/4}}\|u\|_{V^0}\mathrm{e}^{-t}:=
A_{t_0}\|u\|_{V^0}\mathrm{e}^{-t}, \,\, \forall t>t_0,
\end{equation}
where the last equality is the definition of the constant $A_{t_0}$, depending on $t_0$.

Indeed, for a fixed small enough $t_0>0$
\begin{equation}\label{S_u_est2}
\|\mathbf{S}(t,u)\|^2_{V^{1/2}}=\sum_{k\in\mathbb{Z}^3\setminus
\{0\}}|\hat{u}_k|^2|k|\mathrm{e}^{-2k^2t_0}\mathrm{e}^{-2k^2(t-t_0)}\leq\|S(t_0,u)\|^2_{V^{1/2}}\mathrm{e}^{-2(t-t_0)}\,\,
\forall t>t_0.
\end{equation}
Next, since function $f_{t_0}(y)=y\mathrm{e}^{-2y^2t_0},\, y\in
\mathbb{R}_+$ reaches its maximum at
$\hat{y}=\frac{1}{2\sqrt{t_0}}$ and
$f_{t_0}(\hat{y})=\frac{\mathrm{e}^{-1/2}}{2\sqrt{t_0}}$,
\begin{equation}\label{S_u_est3}
\|\mathbf{S}(t_0,u)\|^2_{V^{1/2}}=\sum_{k\in\mathbb{Z}^3\setminus
\{0\}}|\hat{u}_k|^2|k|\mathrm{e}^{-2k^2t_0}\leq
\frac{\mathrm{e}^{-1/2}}{2\sqrt{t_0}}\|u\|^2_{V^0}.
\end{equation}
Estimate \eqref{S_u_est1} follows from \eqref{S_u_est2}-\eqref{S_u_est3}.

%Using the maximum principle for a heat equation, we obtain
%\begin{equation}\label{S_u_est4}
%\|\mathbf{S}(t,u)\|_{L_{\infty}}\leq\|u\|_{L_{\infty}}, t\in(0,t_0).
%\end{equation}

Estimates \eqref{psi_est_proof2} and \eqref{S_u_est1} together with the estimate
\begin{equation}\label{S_y_est}
\|\mathbf{S}(t,y_0)\|_{V^{1/2}}\le \|y_0\|_{V^{1/2}}\mathrm{e}^{-t},
\end{equation}
imply, that for all $t\in(t_0,T)$
\begin{equation*}
\begin{split}
-&\Psi(\mathbf{S}(t,y_0-\lambda u))>\\
&3\beta\lambda^3\mathrm{e}^{-18t}
-c\mathrm{e}^{-3t}(\lambda^2 A^2_{t_0}\|y_0\|_{V^{1/2}}\!+\!\lambda A_{t_0}\|y_0\|^2_{V^{1/2}}\!+\!\|y_0\|^3_{V^{1/2}})\ge \\
&2\beta\lambda^3 \mathrm{e}^{-18t}+\lambda^3 \mathrm{e}^{-3t}\left(\beta \mathrm{e}^{-15T}\vphantom{\frac{\|y_0\|^2_{V^{1/2}}}{\lambda^2}}-\right.\\
&\left.c(A^2_{t_0}\frac{\|y_0\|_{V^{1/2}}}{\lambda}+A_{t_0}\frac{\|y_0\|^2_{V^{1/2}}}{\lambda^2}+
\frac{\|y_0\|^3_{V^{1/2}}}{\lambda^3})\right)>\\
&2\beta\lambda^3 \mathrm{e}^{-18t}+\lambda^3 \mathrm{e}^{-3t}\left(\beta \mathrm{e}^{-15T}-\frac{c}{\lambda}(A^2_{t_0}\|y_0\|_{V^{1/2}}+
A_{t_0}\|y_0\|^2_{V^{1/2}}+\|y_0\|^3_{V^{1/2}})\right).\\
\end{split}
\end{equation*}
Therefore, choosing for every  $T>0$
\begin{equation}\label{lambda_01_def}
\lambda>\lambda_{01}=\frac{c\mathrm{e}^{15T}}{\beta}(A^2_{t_0}\|y_0\|_{V^{1/2}}+A_{t_0}\|y_0\|^2_{V^{1/2}}+\|y_0\|^3_{V^{1/2}}),
\end{equation}
we get the bound \eqref{psi_est} for all $t\in(t_0,T)$.

Now let us show, that for small enough $t_0$ this relation also holds for $t\in(0,t_0)$.

Using the maximum principle for a heat equation, we obtain
\begin{equation}\label{S_u_est4}
\|\mathbf{S}(t,u)\|_{L_{\infty}}\leq\|u\|_{L_{\infty}}, t\in(0,t_0).
\end{equation}

Therefore, according to \eqref{psi_est_proof3} and \eqref{S_y_est}, we get that $\forall t\in(0,t_0)$
\begin{equation*}
\begin{split}
&-\Psi(\mathbf{S}(t,y_0-\lambda u))>\\
&3\beta\lambda^3\mathrm{e}^{-18t}
-c(\lambda^2\|u\|^2_{L_{\infty}}\|y_0\|_{V^{1/2}}\mathrm{e}^{-t}+
\lambda\|u\|^2_{L_{\infty}}\|y_0\|^2_{V^{1/2}}\mathrm{e}^{-2t}\!+\!\|y_0\|^3_{V^{1/2}}\mathrm{e}^{-3t}).
\end{split}
\end{equation*}

Choosing
$t_0<\frac{\|u\|^4_{V_0}}{4\mathrm{e}\|u\|^4_{L_{\infty}}}=\frac{1}{4\mathrm{e}\|u\|^4_{L_{\infty}}}$,
we get $\|u\|_{L_{\infty}}<\frac{\mathrm{e}^{-1/4}}{\sqrt{2}t_0^{1/4}}=A_{t_0}\mathrm{e}^{-t_0}$,
and
\begin{equation*}
\begin{split}
&-\Psi(\mathbf{S}(t,y_0-\lambda u))>\\
&3\beta\lambda^3\mathrm{e}^{-18t}
-\\
&c(\lambda^2A_{t_0}^2\mathrm{e}^{-2t_0}\|y_0\|_{V^{1/2}}\mathrm{e}^{-t}+\lambda A_{t_0}\mathrm{e}^{-t_0}\|y_0\|^2_{V^{1/2}}\mathrm{e}^{-2t}\!+\!\|y_0\|^3_{V^{1/2}}\mathrm{e}^{-3t})>\\
&2\beta\lambda^3\mathrm{e}^{-18t}+\\
&\lambda^3\mathrm{e}^{-18t_0}\left(\beta
-c\left(A_{t_0}^2\frac{\|y_0\|_{V^{1/2}}}{\lambda}\mathrm{e}^{16t_0}+A_{t_0}\frac{\|y_0\|^2_{V^{1/2}}}{\lambda^2}\mathrm{e}^{17t_0}+
\!\frac{\|y_0\|^3_{V^{1/2}}}{\lambda^3}\mathrm{e}^{18t_0}\right)\right)>\\
&2\beta\lambda^3\mathrm{e}^{-18t}+\\
&\lambda^3\mathrm{e}^{-18t_0}\left(\beta-
\frac{c}{\lambda}\left(A_{t_0}^2\|y_0\|_{V^{1/2}}\mathrm{e}^{16t_0}+A_{t_0}\|y_0\|^2_{V^{1/2}}\mathrm{e}^{17t_0}+\!\|y_0\|^3_{V^{1/2}}\mathrm{e}^{18t_0}\right)\right).
\end{split}
\end{equation*}

The last estimate implies, that if
\begin{equation}\label{lambda_02_def}
\lambda>\lambda_{02}:=\frac{c}{\beta}\left(A_{t_0}^2\|y_0\|_{V^{1/2}}\mathrm{e}^{16t_0}+A_{t_0}\|y_0\|^2_{V^{1/2}}\mathrm{e}^{17t_0}+\!\|y_0\|^3_{V^{1/2}}\mathrm{e}^{18t_0}\right),
\end{equation}
then $-\Psi(\mathbf{S}(t,y_0-\lambda u))>2\beta\lambda^3\mathrm{e}^{-18t}$ $\forall t\in(0,t_0)$.

Finally, taking $\lambda>\lambda_0:=\max\{\lambda_{01},\lambda_{02}\}$, where $\lambda_{01}$ and $\lambda_{02}$ were defined in \eqref{lambda_01_def} and \eqref{lambda_02_def} correspondingly, we get that the estimate \eqref{psi_est} is true for all $t\in(0,T)$.
%The denominator of $\Phi (\mathbf{S}(\tau ,\cdot ;y_0-\lambda u))$
%is positive, i.e.
%\begin{equation}\label{denom_est}
%\int_{\mathbb{T}^3}|\mathbf{S}(t,y_0-\lambda u)|^2dx>0
%\end{equation}
%Bounds \eqref{psi_est},\eqref{denom_est} imply that the function
%\eqref{denominator} is more than 1.
\end{proof}
%\section{Some development of stabilization theory of solution for NPE}\label{s2.42}

Further we shall need estimate \eqref{psi_est} in a more convenient form:

\begin{proposition}\label{psi_est_lem2}
 Let $\Psi(y_1,y_2,y_3)$ be the function defined in \eqref{Psi}, and $u(x)$ -- the control function \eqref{u_def}-\eqref{def_w}. Then for each $y_0\in V^{1/2}\setminus M_{-}$ and for each $T>0$ there exists $\lambda_0=\lambda(\|y_0\|_{V^{1/2}},T)$ such that for any $\lambda>\lambda_0$
\begin{equation}\label{psi_est1}
\frac{-\Psi(\mathbf{S}(t,\cdot;y_0-\lambda
u))}{\|\mathbf{S}(t,\cdot;y_0-\lambda u)\|^3_{V^0}} > \beta
\mathrm{e}^{-15t},\quad \forall t\in(0,T),
\end{equation}
where $\beta$ is the positive constant from Theorem
\ref{main_est_b}.
\end{proposition}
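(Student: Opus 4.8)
The plan is to divide the lower bound on the numerator furnished by Lemma~\ref{denom_est_lem1} by a matching upper bound on the denominator. First I would exploit the linearity of the heat (Stokes) semigroup to write $\mathbf{S}(t,\cdot;y_0-\lambda u)=\mathbf{S}(t,\cdot;y_0)-\lambda\mathbf{S}(t,\cdot;u)$. Since every element of $V^0$ has zero mean, all of its Fourier modes satisfy $|k|\ge 1$, so $\|\mathbf{S}(t,\cdot;z)\|_{V^0}\le\|z\|_{V^0}\mathrm{e}^{-t}$ for any $z\in V^0$; combining this with the trivial embedding $\|z\|_{V^0}\le\|z\|_{V^{1/2}}$ (again because $|k|\ge1$) and with $\|u\|_{V^0}=1$ this gives
\[
\|\mathbf{S}(t,\cdot;y_0-\lambda u)\|_{V^0}\le\|\mathbf{S}(t,\cdot;y_0)\|_{V^0}+\lambda\|\mathbf{S}(t,\cdot;u)\|_{V^0}\le\bigl(\|y_0\|_{V^{1/2}}+\lambda\bigr)\mathrm{e}^{-t}.
\]

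Next I would take $\lambda$ above the threshold $\lambda_0$ of Lemma~\ref{denom_est_lem1}, so that $-\Psi(\mathbf{S}(t,\cdot;y_0-\lambda u))>2\beta\lambda^3\mathrm{e}^{-18t}>0$ for all $t\in(0,T)$; this strict positivity already forces $\mathbf{S}(t,\cdot;y_0-\lambda u)\ne0$, hence the quotient in \eqref{psi_est1} is well defined. Dividing the two displayed inequalities yields
\[
\frac{-\Psi(\mathbf{S}(t,\cdot;y_0-\lambda u))}{\|\mathbf{S}(t,\cdot;y_0-\lambda u)\|^3_{V^0}}>\frac{2\beta\lambda^3\mathrm{e}^{-18t}}{(\|y_0\|_{V^{1/2}}+\lambda)^3\mathrm{e}^{-3t}}=\frac{2\lambda^3}{(\|y_0\|_{V^{1/2}}+\lambda)^3}\,\beta\,\mathrm{e}^{-15t}.
\]

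Finally I would absorb the algebraic prefactor: one has $2\lambda^3\ge(\|y_0\|_{V^{1/2}}+\lambda)^3$ as soon as $\lambda\ge\|y_0\|_{V^{1/2}}/(2^{1/3}-1)$, so enlarging $\lambda_0$ if necessary to also satisfy this bound — a quantity still depending only on $\|y_0\|_{V^{1/2}}$ and $T$ — produces \eqref{psi_est1} for every $t\in(0,T)$. I do not expect any genuine obstacle here: the entire analytic difficulty has already been discharged in Theorem~\ref{main_est_th} and Lemma~\ref{denom_est_lem1}, and the only point worth a remark is the non-vanishing of the denominator, which is immediate from $-\Psi>0$.
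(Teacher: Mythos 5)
Your proof is correct and follows essentially the same route as the paper: a lower bound on the numerator from Lemma~\ref{denom_est_lem1}, the semigroup decay $\|\mathbf{S}(t,\cdot;z)\|_{V^0}\le\|z\|_{V^0}\mathrm{e}^{-t}$ for the denominator, and the observation that $\|y_0-\lambda u\|^3_{V^0}\le 2\lambda^3$ once $\lambda$ exceeds a multiple of $\|y_0\|$. The only difference is that you obtain this last bound by the plain triangle inequality with the explicit threshold $\lambda\ge\|y_0\|_{V^{1/2}}/(2^{1/3}-1)$, which is cleaner than the paper's reverse-triangle-inequality expansion requiring $\lambda>7\|y_0\|_{V^0}$; both thresholds depend only on $\|y_0\|$ and so fold into $\lambda_0$ as required.
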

\begin{proof}
Below we shall use notation $\|\cdot\|=\|\cdot\|_{V^0}$. Since
$\|u\|=1$, we have
\begin{equation}
2=2\|u\|^3\geq 2\left(\|u-\frac{y_0}{\lambda}\|-\|\frac{y_0}{\lambda}\|\right)^3=\|u-\frac{y_0}{\lambda}\|^3+B,
\end{equation}
where
\begin{equation}
\begin{split}
&B=\|u-\frac{y_0}{\lambda}\|^3-6\|u-\frac{y_0}{\lambda}\|^2\|\frac{y_0}{\lambda}\|+6\|u-\frac{y_0}{\lambda}\|\|\frac{y_0}{\lambda}\|^2
-2\|\frac{y_0}{\lambda}\|^3=\\
&\|u-\frac{y_0}{\lambda}\|^2\left(\|\|u-\frac{y_0}{\lambda}\|-6\frac{y_0}{\lambda}\|\right)+
6\|\frac{y_0}{\lambda}\|^2\left(\|u-\frac{y_0}{\lambda}\|-\frac{1}{3}\|\frac{y_0}{\lambda}\|\right)\geq\\
&\|u-\frac{y_0}{\lambda}\|^2(1-7\|\frac{y_0}{\lambda}\|)+6\|\frac{y_0}{\lambda}\|^2(1-\frac{4}{3}\|\frac{y_0}{\lambda}\|)>0
\end{split}
\end{equation}
for $\lambda>7\|y_0\|$. Therefore, $2>\|u-\frac{y_0}{\lambda}\|^3$. Applying this inequality to the right hand side of \eqref{psi_est} and dividing both parts be $\lambda^3\|u-\frac{y_0}{\lambda}\|^3$, we get that
\begin{equation}\label{psi_est2}
\frac{-\Psi(\mathbf{S}(t,\cdot;y_0-\lambda u))}{\|y_0-\lambda u\|^3} > \beta \mathrm{e}^{-18t},\quad \forall t\in(0,T),
\end{equation}

Similarly to estimate \eqref{S_u_est2},
\begin{equation}\label{psi_est1_S_u}
\|S(t,\cdot;y_0-\lambda u)\|^2=\sum_{k\in\mathbb{Z}^3\setminus
\{0\}}|\hat{y}_{0,k}-\lambda
\hat{u}_k|^2\mathrm{e}^{-2k^2t}\leq\|y_0-\lambda u\|^2 \mathrm{e}^{-2t}\,\, \forall
t>0.
\end{equation}
 Dividing $-\Psi(\mathbf{S}(t,\cdot;y_0-\lambda u))$ by $\|\mathbf S(t,y_0-\lambda u)\|^3$, taking into account \eqref{psi_est2} and \eqref{psi_est1_S_u}, we obtain
\begin{equation*}
\frac{-\Psi(\mathbf{S}(t,\cdot;y_0-\lambda u))}{\|\mathbf
S(t,y_0-\lambda u)\|^3} >
\frac{-\Psi(\mathbf{S}(t,\cdot;y_0-\lambda u))}{\|y_0-\lambda
u\|^3\mathrm{e}^{-3t}}>\beta \mathrm{e}^{-15t},\quad \forall t\in(0,T).
\end{equation*}
This completes the proof of \eqref{psi_est1}.
\end{proof}
\subsection{ Proof of the stabilization result: the second step}

This section completes the proof of Theorem \ref{stabiliz_theorem}.

First, let us prove the following:
\begin{theorem}\label{sol_est_th}
 Let $y(t,x;v)$ be the solution of \eqref{npe}, \eqref{NPE_boundcond} with initial condition
 $v:=y_0-\lambda u$, $y_0\in V^{1/2}\setminus M_-$, $\lambda\gg 1$, where $u(x)$ is the control
 function defined in \eqref{u_def}--\eqref{def_w}. Then for every $T>0$ there exists such
 $\lambda_0=\lambda(\|y_0\|_{V^{1/2}},T)$, that for every $\lambda>\lambda_0$ $y(t,x;v)$ satisfies
 the following inequality:
\begin{equation}\label{sol_est1}
\|y(t,\cdot;v)\|\leq\frac{\|v\|\mathrm{e}^{-t}}{1+\frac{\beta}{16}\|v\|(1-\mathrm{e}^{-16t})}, \quad \forall t\in(0,T),
\end{equation}
where $\beta$ it the positive constant from Theorem \ref{main_est_b}.
\end{theorem}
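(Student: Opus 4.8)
The idea is to exploit the explicit formula \eqref{exp_sol} for the solution of NPE, which reduces everything to controlling the denominator $1-\int_0^t\Phi(\mathbf{S}(\tau;v))d\tau$ from below. Writing $\mathbf{S}(t)=\mathbf{S}(t,\cdot;v)$ for brevity, formula \eqref{exp_sol} gives
\[
\|y(t,\cdot;v)\|=\frac{\|\mathbf{S}(t)\|}{1-\int_0^t\Phi(\mathbf{S}(\tau))d\tau},
\]
so I would first argue that the denominator stays positive on $(0,T)$ and then estimate it. By definition \eqref{Phi_def} of $\Phi$ and notation \eqref{Psi}, we have $\Phi(\mathbf{S}(\tau))=\Psi(\mathbf{S}(\tau))/\|\mathbf{S}(\tau)\|^2$, and Proposition \ref{psi_est_lem2} yields, for $\lambda>\lambda_0$,
\[
-\Phi(\mathbf{S}(\tau))=\frac{-\Psi(\mathbf{S}(\tau))}{\|\mathbf{S}(\tau)\|^2}
=\frac{-\Psi(\mathbf{S}(\tau))}{\|\mathbf{S}(\tau)\|^3}\,\|\mathbf{S}(\tau)\|
>\beta\,\mathrm{e}^{-15\tau}\,\|\mathbf{S}(\tau)\|,\qquad \tau\in(0,T).
\]

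**The key inequality for the denominator.** Combining this with the elementary heat-semigroup bound $\|\mathbf{S}(\tau)\|\ge\|v\|\,\mathrm{e}^{-?}$ — actually the bound that is useful here goes the other way: I want a lower bound on $-\int_0^t\Phi$, so I use $\|\mathbf{S}(\tau)\|\ge$ something. The cleanest route is to plug in the inequality $\|\mathbf S(\tau)\|\ge\|y(\tau,\cdot;v)\|$ (true because the denominator in \eqref{exp_sol} is $\ge 1$ once we know $\int_0^\tau\Phi\le0$), so that
\[
1-\int_0^t\Phi(\mathbf{S}(\tau))d\tau\ge 1+\beta\int_0^t\mathrm{e}^{-15\tau}\|\mathbf S(\tau)\|\,d\tau.
\]
Hmm, to obtain the precise constant $\beta/16$ and the factor $(1-\mathrm e^{-16t})$ in \eqref{sol_est1}, I expect the bookkeeping to go as follows: use instead the sharper estimate $\|\mathbf S(\tau)\|\ge\|v\|\,\mathrm e^{-\tau}$ is false, but $\|\mathbf S(\tau)\|\ge\mathrm e^{-\tau}\cdot(\text{lowest mode})$... so the correct move is to bound $-\Phi(\mathbf S(\tau))>\beta\mathrm e^{-15\tau}\|\mathbf S(\tau)\|$ and then substitute the a priori inequality $\|\mathbf S(\tau)\|\ge\|v\|\mathrm e^{-\tau}/(1-\int_0^\tau\Phi)$ that comes from \eqref{exp_sol} rearranged as $\|\mathbf S\|=\|y\|(1-\int_0^\tau\Phi)\ge\|y\|$, together with $\|\mathbf S(\tau)\|\ge\|v\|\mathrm e^{-\tau}$... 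I would chase this to the Bernoulli-type differential inequality whose solution is exactly the right-hand side of \eqref{sol_est1}. Concretely, set $g(t)=1-\int_0^t\Phi(\mathbf S(\tau))d\tau$; then $g'(t)=-\Phi(\mathbf S(t))>\beta\mathrm e^{-15t}\|\mathbf S(t)\|$, and since $\|\mathbf S(t)\|=\|v\|\mathrm e^{-t}$ for the Stokes flow is replaced by $\|\mathbf S(t)\|\ge\|v\|\mathrm e^{-t}$... I will use $\|\mathbf S(t)\|\le\|v\|\mathrm e^{-t}$ going into the numerator and $\|\mathbf S(t)\|\ge$ (something) going into $g'$. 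The honest statement is $g'(t)>\beta\mathrm e^{-15t}\|\mathbf S(t)\|$ and $\|y(t)\|=\|\mathbf S(t)\|/g(t)$, so $g'(t)>\beta\mathrm e^{-15t}\|y(t)\|g(t)=\beta\mathrm e^{-15t}\|\mathbf S(t)\|$; using $\|\mathbf S(t)\|\ge\|v\|\mathrm e^{-t}$ is wrong in general, so instead I integrate $g'(t)\ge\beta\mathrm e^{-15t}\|\mathbf S(t)\|\ge\beta\mathrm e^{-15t}\cdot\frac{\|v\|\mathrm e^{-t}}{?}$. The cleanest correct derivation, which I would write out, uses $g'(t)>\beta\mathrm e^{-16t}\|v\|$ after bounding $\|\mathbf S(t)\|$ suitably from below in terms of $\|v\|$ on the relevant frequency range, giving $g(t)>1+\frac{\beta\|v\|}{16}(1-\mathrm e^{-16t})$, and then \eqref{sol_est1} follows immediately from $\|y(t,\cdot;v)\|=\|\mathbf S(t)\|/g(t)\le\|v\|\mathrm e^{-t}/g(t)$.

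**Positivity of the denominator and the main obstacle.** Before dividing, I must ensure $g(t)>0$ on all of $(0,T)$ — otherwise \eqref{exp_sol} would predict a blow-up. This follows by a continuity/bootstrap argument: $g(0)=1>0$; as long as $g>0$ the estimate $-\Phi(\mathbf S(\tau))>0$ from Proposition \ref{psi_est_lem2} forces $g$ to be nondecreasing, hence $g(t)\ge 1$, so $g$ can never reach $0$; thus $g(t)\ge 1$ throughout $(0,T)$ and the bootstrap closes. The main obstacle is exactly the passage from the pointwise bound $-\Phi(\mathbf S(\tau))>\beta\mathrm e^{-15\tau}\|\mathbf S(\tau)\|$ of Proposition \ref{psi_est_lem2} to a clean differential inequality with the right exponent $\mathrm e^{-16t}$: one must trade the extra $\|\mathbf S(\tau)\|$ factor against a lower bound $\|\mathbf S(\tau)\|\gtrsim\|v\|\mathrm e^{-\tau}$, and such a lower bound on the full $V^0$-norm of a heat flow is false for a general initial datum (high modes decay fast). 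The resolution — which I expect the authors to use — is that the relevant contribution is governed by a fixed finite set of low Fourier modes of $u$ coming from \eqref{u_def}, \eqref{def_w}, on which $\|\mathbf S(\tau)\|\ge c\mathrm e^{-\tau}\|v\|$ does hold uniformly, and $\lambda$ is taken large enough (as in \eqref{lambda_01_def}, \eqref{lambda_02_def}) that $\|v\|=\|y_0-\lambda u\|\sim\lambda$ dominates. Once this lower bound is in hand, integrating $g'(t)>\beta\mathrm e^{-16t}\|v\|$ from $0$ to $t$ gives $g(t)>1+\tfrac{\beta}{16}\|v\|(1-\mathrm e^{-16t})$, and combining with $\|\mathbf S(t)\|\le\|v\|\mathrm e^{-t}$ in \eqref{exp_sol} yields \eqref{sol_est1} for all $t\in(0,T)$, completing the proof.
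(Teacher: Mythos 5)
Your overall strategy --- reduce everything to a lower bound on the denominator $g(t)=1-\int_0^t\Phi(\mathbf S(\tau,\cdot;v))\,d\tau$ of the explicit formula \eqref{exp_sol} and combine it with $\|\mathbf S(t,\cdot;v)\|\le\|v\|\mathrm e^{-t}$ --- has a genuine gap at exactly the point you flag. From Proposition \ref{psi_est_lem2} you get $g'(\tau)=-\Phi(\mathbf S(\tau))>\beta \mathrm e^{-15\tau}\|\mathbf S(\tau)\|$, and to reach $g(t)>1+\tfrac{\beta}{16}\|v\|(1-\mathrm e^{-16t})$ you would need $\|\mathbf S(\tau)\|\ge\|v\|\mathrm e^{-\tau}$, which, as you yourself observe, is false for a general initial datum. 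Your proposed repair via ``a fixed finite set of low Fourier modes of $u$'' does not close the gap: the function $u$ of \eqref{u_def}--\eqref{def_w} is a double curl of a function supported in a small cube, its energy is spread over all frequencies, and there is no uniform comparison between the $|k|=1$ content of $v=y_0-\lambda u$ and $\|v\|$; neither the structure of $u$ nor the largeness of $\lambda$ supplies one. So the differential inequality $g'(t)>\beta\mathrm e^{-16t}\|v\|$ is not established, and the route through a lower bound on $g$ alone does not yield \eqref{sol_est1}.

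The paper circumvents this by never separating the numerator and denominator of \eqref{exp_sol}. It takes the $V^0$ scalar product of the equation with $y$, obtaining $\tfrac12\partial_t\|y\|^2+\|y_x\|^2-\Psi(y)=0$, divides by $\|y\|^3$, and sets $z=1/\|y\|$ (note that $z=g/\|\mathbf S\|$, so both pieces are tracked together). The Poincar\'e-type inequality $\|y_x\|^2\ge\|y\|^2$ (valid since $\hat v(0)=0$) converts the dissipation into the linear term $-z$, and because $\Psi$ is homogeneous of degree $3$ while $y=\mathbf S/g$ with a scalar factor $g$, the ratio $-\Psi(y)/\|y\|^3$ is bounded below by $-\Psi(\mathbf S)/\|\mathbf S\|^3>\beta\mathrm e^{-15t}$ by \eqref{psi_est1} --- the troublesome factor $\|\mathbf S(\tau)\|$ cancels identically. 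The resulting linear inequality $\partial_t z-z\ge\beta\mathrm e^{-15t}$ integrates (with integrating factor $\mathrm e^{-t}$) to $z(t)\ge \mathrm e^{t}\bigl(1/\|v\|+\tfrac{\beta}{16}(1-\mathrm e^{-16t})\bigr)$, which is precisely \eqref{sol_est1}. If you want to salvage your framework, the fix is to estimate $g/\|\mathbf S\|$ rather than $g$ alone.
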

\begin{proof}
Multiplying equation \eqref{NPE_1} by $y(t,x;v)$ scalarly in $V^0$
and taking into account definitions \eqref{Phi_def}, \eqref{Psi}
of functionals $\Phi$ and $\Psi$, we get after simple
transformation:
\begin{equation*}
\frac{1}{2}\partial_t\|y(t,v)\|^2+\|y_x(t,v)\|^2-\Psi(y(t,v))=0,
\end{equation*}
where $y_x=(\partial_{x_1}y,\partial_{x_2}y,\partial_{x_3}y,)$.
Dividing this equality on $\|y(t,v)\|^3$, we obtain that
\begin{equation*}
\frac{\partial_t\|y(t,v)\|}{\|y(t,v)\|^2}+\frac{\|y_x(t,v)\|^2}{\|y(t,v)\|^3}=\frac{\Psi(y(t,v))}{\|y(t,v)\|^3}.
\end{equation*}
Let us introduce notation $z(t)=1/\|y(t,v)\|$. Then the last equality can be rewritten as
\begin{equation}\label{sol_est_th_ineq1}
-\partial_t
z(t)+z(t)=\frac{\Psi(y(t,v))}{\|y(t,v)\|^3}-\frac{\|y_x(t,v)\|^2}{\|y(t,v)\|^3}+
\frac{1}{\|y(t,v)\|}\leq\frac{\Psi(y(t,v))}{\|y(t,v)\|^3},
\end{equation}
because
$$
-\frac{\|y_x(t,v)\|^2}{\|y(t,v)\|^3}+
\frac{1}{\|y(t,v)\|}=-\frac{1}{\|y(t,v)\|}\left( \frac{\sum_{k\ne
0}k^2|\hat v_k|^2\mathrm{e}^{-k^2t}}{\sum_{k\ne 0}|\hat v_k|^2\mathrm{e}^{-k^2t}}-1
\right)\le 0.
$$
Let us transform the right side of \eqref{sol_est_th_ineq1}. In
virtue of explicit formula \eqref{exp_sol},
\begin{equation}\label{sol_est_th_ineq2}
\|y(t,v)\|^3\leq\frac{\|\mathbf{S}(t,\cdot;v)\|^3}{{\left(1-\int_0^t\frac{\Psi(\mathbf{S}(\tau,\cdot ;v)}{\|\mathbf{S}(\tau,\cdot;v)\|^2}d\tau\right)^3}},
-\Psi(y(t,\cdot;v))=\frac{-\Psi(\mathbf{S}(t,\cdot; v))}{{\left(1-\int_0^t\frac{\Psi(\mathbf{S}(\tau,\cdot ;v)}{\|\mathbf{S}(\tau,\cdot;v)\|^2}d\tau\right)^3}}
\end{equation}
Dividing the second relation of \eqref{sol_est_th_ineq2} on the first one, we get that
\begin{equation}\label{sol_est_th_ineq3}
\frac{{-\Psi(y(t,\cdot; v))}}{\|y(t,v)\|^3}\geq \frac{{-\Psi(\mathbf{S}(t,\cdot; v))}}{\|\mathbf{S}(t,\cdot; v)\|^3}.
\end{equation}

Inequalities \eqref{sol_est_th_ineq1}, \eqref{sol_est_th_ineq3} and \eqref{psi_est1} imply the estimate
\begin{equation}\label{sol_est_th_ineq11}
\partial_t z(t)-z(t)\geq\beta \mathrm{e}^{-15t}, \text{ or } z(t)\geq \mathrm{e}^{t}\left(\frac{1}{\|v\|}+\frac{\beta}{16}(1-\mathrm{e}^{-16t})\right). \end{equation}
Changing back from $z$ to $y$, we get \eqref{sol_est1}.
\end{proof}
Now, let us prove the following corollary of Theorem \ref{sol_est_th}:
\begin{cor}\label{cor_fin}
Let $y(t,x;v)$ be the solution of \eqref{npe}, \eqref{NPE_boundcond} with initial condition $v:=y_0-\lambda u$,
$y_0\in V^{1/2}\setminus M_-$, $\lambda\gg 1$, where $u(x)$ is the control function defined in
\eqref{u_def}--\eqref{def_w}. Then there exists $T>0$ independent of $v$, such that
$y(t,\cdot;v)\in B_{r_0}$, where $r_0=\frac{1}{2c_1}$ was defined in Lemma \ref{sol_est_th0}.
\end{cor}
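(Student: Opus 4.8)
The plan is to read the assertion off directly from the a priori bound \eqref{sol_est1} of Theorem \ref{sol_est_th}, the key observation being that the right-hand side of \eqref{sol_est1} is, uniformly in $\|v\|$, dominated by a $v$-independent quantity that decays to $0$ as $t\to\infty$; once this is noticed, one only has to choose $T$ large enough that this quantity drops below $r_0$.

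First I would record the elementary inequality: for all $s>0$ and all $t>0$,
\[
\frac{s\,\mathrm{e}^{-t}}{1+\tfrac{\beta}{16}s\,(1-\mathrm{e}^{-16t})}
\;<\;\frac{s\,\mathrm{e}^{-t}}{\tfrac{\beta}{16}s\,(1-\mathrm{e}^{-16t})}
\;=\;\frac{16\,\mathrm{e}^{-t}}{\beta\,(1-\mathrm{e}^{-16t})},
\]
where $\beta>0$ is the universal constant of Theorem \ref{main_est_th} (in particular the right-hand side depends on neither $v$, nor $y_0$, nor $\lambda$). Since $16\,\mathrm{e}^{-t}/(\beta(1-\mathrm{e}^{-16t}))\to 0$ as $t\to\infty$, while $r_0=1/(2c_1)$ is a fixed positive number, I can fix once and for all a time $T>0$ — depending only on $\beta$ and $c_1$, hence independent of $v$ — such that
\[
\frac{16\,\mathrm{e}^{-T}}{\beta\,(1-\mathrm{e}^{-16T})}\;<\;r_0 ;
\]
e.g. any $T\ge 1$ with $\mathrm{e}^{-T}<\beta(1-\mathrm{e}^{-16})/(32c_1)$ works.

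Next I would apply Theorem \ref{sol_est_th} with $2T$ in place of $T$ (so that the estimate \eqref{sol_est1} is available at the endpoint $t=T$): this produces $\lambda_0=\lambda(\|y_0\|_{V^{1/2}},T)$ such that for every $\lambda>\lambda_0$ the solution $y(t,\cdot;v)$ with $v=y_0-\lambda u$ is defined on $(0,2T)$ and satisfies \eqref{sol_est1} there. Evaluating \eqref{sol_est1} at $t=T$ and combining it with the displayed inequality gives
\[
\|y(T,\cdot;v)\|
\;\le\;\frac{\|v\|\,\mathrm{e}^{-T}}{1+\tfrac{\beta}{16}\|v\|\,(1-\mathrm{e}^{-16T})}
\;<\;\frac{16\,\mathrm{e}^{-T}}{\beta\,(1-\mathrm{e}^{-16T})}
\;<\;r_0 ,
\]
i.e. $y(T,\cdot;v)\in B_{r_0}$, which is the claim, with $T$ independent of $v$ as required.

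I do not anticipate a genuine obstacle: the corollary is little more than a repackaging of Theorem \ref{sol_est_th}. The one point deserving care is the order of the quantifiers — $T$ must be selected \emph{before} Theorem \ref{sol_est_th} is invoked and must not depend on $v$. This is legitimate precisely because the threshold that $T$ has to clear involves only the universal constants $\beta$ and $c_1$, whereas all the dependence on the initial datum (through $\|y_0\|_{V^{1/2}}$) is pushed into the lower bound $\lambda_0$ on the control amplitude, exactly as in Theorem \ref{sol_est_th} and Proposition \ref{psi_est_lem2}.
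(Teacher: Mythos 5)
Your proposal is correct and follows essentially the same route as the paper: both bound the right-hand side of \eqref{sol_est1} by the $v$-independent quantity $16\mathrm{e}^{-t}/(\beta(1-\mathrm{e}^{-16t}))$ and then choose $T$ so that this drops below $r_0=1/(2c_1)$ (the paper does this by locating the root of $\beta x^{16}+32c_1x-\beta=0$ via the intermediate value theorem, you by simply letting $t\to\infty$). Your extra care about invoking Theorem \ref{sol_est_th} on $(0,2T)$ so that the estimate is available at the endpoint $t=T$ is a harmless refinement the paper glosses over.
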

\begin{proof}
According to Theorem \ref{sol_est_th}, for every $T>0$ we can choose $\lambda_0$ in such a way,
that for every $\lambda>\lambda_0$ the solution $y(t,x;y_0-\lambda u)$ of the problem
\eqref{npe}, \eqref{NPE_boundcond} with initial condition $y_0-\lambda u$ satisfies the estimate
\begin{equation*}
\|y(t,\cdot;v)\|\leq\frac{\|v\|\mathrm{e}^{-t}}{1+\frac{\beta}{16}\|v\|(1-\mathrm{e}^{-16t})}<
\frac{16\mathrm{e}^{-t}}{\beta(1-\mathrm{e}^{-16t})},  \quad \forall t\in(0,T).
\end{equation*}

Taking into account this bound, we should find $T$ satisfying
\begin{equation*}
\frac{16\mathrm{e}^{-T}}{\beta(1-\mathrm{e}^{-16T})}\leq r_0.
\end{equation*}

Denoting $\mathrm{e}^{-T}=:x$ and changing $r_0$ for $\frac{1}{2c_1}$, we
reduce the problem to finding the roots from $(0,1)$ of the
following equation:
\begin{equation*}
F(x):=\beta x^{16}+32c_1x-\beta =0.
\end{equation*}

Since $F(0)=-\beta<0$, $F(1)=32c_1>0$ and $F'(x)>0$ for
$x\in(0,1)$, this equation has a unique solution $x_0\in(0,1)$.
Therefore, for $T=\ln \frac{1}{x_0}$ we get that $y(T,\cdot; v)\in
B_{r_0}$ with $r_0=\frac{1}{c_1}$.
\end{proof}

{\it The completion of Theorem \ref{stabiliz_theorem} proof.} Let
$T$ be the instant calculated in Corollary \ref{cor_fin}, and
$\lambda_0:=\lambda_0(\| y_0\|_{V^1/2},T)$ be the function from
formulation of Theorem \ref{sol_est_th}. By this Theorem for each
$\lambda
>\lambda_0$ the solution $y(t,x;y_0-\lambda u)$ of problem \eqref{npe}, \eqref{NPE_boundcond}
with initial condition $y_0-\lambda u$ satisfies estimate
\eqref{sol_est1}. Hence by Corollary \ref{cor_fin} $\|y(T,\cdot
;y_0-\lambda u)\|_0\le r_0$, where $r_0$ is the radius of the ball
from Lemma \ref{sol_est_th0}. By this lemma the solution
$y(t,x;y_0-\lambda u)$ tends to zero exponentially as $T< t\to
\infty$. This completes the justification of the stabilization
construction.
%%%%%%%%%%%%%%%%%%%%%%%%%%%%%%%%%%%%%%%%%%%%%%%%%%%%%%%
%%% Acknowledgements. пїЅпїЅР»
%%%%%%%%%%%%%%%%%%%%%%%%%%%%%%%%%%%%%%%%%%%%%%%%%%%%%%%

%%%%%%%%%%%%%%%%%%%%%%%%%%%%%%%%%%%%%%%%%%%%%%%%%%%%%%%
%%% Conflict of interest. пїЅпїЅпїЅпїЅпїЅпїЅпїЅпїЅпїЅпїЅпїЅпїЅ
%%%%%%%%%%%%%%%%%%%%%%%%%%%%%%%%%%%%%%%%%%%%%%%%%%%%%%%
%\InterestConflict

%%%%%%%%%%%%%%%%%%%%%%%%%%%%%%%%%%%%%%%%%%%%%%%%%%%%%%%
%%% Supplements. пїЅпїЅпїЅпїЅпїЅпїЅпїЅпїЅ, пїЅЗ±пїЅСЎ
%%%%%%%%%%%%%%%%%%%%%%%%%%%%%%%%%%%%%%%%%%%%%%%%%%%%%%%
%\Supplements{}

%%%%%%%%%%%%%%%%%%%%%%%%%%%%%%%%%%%%%%%%%%%%%%%%%%%%%%%
%%% Reference section. пїЅОїпїЅпїЅпїЅпїЅпїЅ
%%% citation in the content using "some words~\cite{1,2}".
%%% ~ is needed to make the reference number is on the same line with the word before it.
%%%%%%%%%%%%%%%%%%%%%%%%%%%%%%%%%%%%%%%%%%%%%%%%%%%%%%%


\begin{thebibliography}{99}
\bibitem{BLT}
\newblock V.Barbu, I.Lasiecka, R.Triggiani,
\newblock{Abstract setting of tangential boundary stabilization
of Navier-Stokes equations by high-andlow-gain feedback
controllers},
\newblock \emph{Nonlinear Analysis}, \textbf{64} (2006), 2704--2746.

\bibitem{B}
\newblock M.Badra,
\newblock{Abstract setting for stabilization of nonlinear parabolic system with a Riccaty-based
  strategy. Application to Navier-Stokes an Boussinesq equations with Neumann or Dirichlet
  control},
\newblock \emph{Discrete and Continuous Dynamical Systems}, \textbf{32}, no.4 (2012), 1169--1208.

\bibitem{C1}
\newblock J.M. Coron,
\newblock On null asymptotic stabilization of the two-dimensional incompressible
Euler equations in a simply connected domains,
\newblock \emph{SIAM J.Control Optim.}, \textbf{37} (1999), 1874--1896.

\bibitem{C2}
\newblock J.M. Coron,
\newblock{Control and Nonlinearity},
\newblock \emph{Math.Surveys and Monographs}, \textbf{136},
\newblock AMS, Providence, RI, 2007, 426 p.


\bibitem{CF}
\newblock J.M.Coron, A.V.Fursikov,
\newblock {Global exact controllability of the 2D Navier-Stokes equations
on manifold without boundary}
\newblock \emph{J.Russian Math. Phys.}, \textbf{4} (1996), 1--20

\bibitem{F2}
\newblock A.V.Fursikov,
\newblock{The simplest semilinear parabolic equation of normal
type,}
\newblock \emph{Mathematical Control and Related  Fields(MCRF)}, \textbf{2} (2012),  141--170


\bibitem{F4}
\newblock A.V.Fursikov,
\newblock{On parabolic system of normal type corresponding to 3D Helmholtz system,}
\newblock \emph{Advances in Mathematical Analysis of PDEs, Proc. St. Petersburg Math. Soc.},
\textbf{XV}; AMS Transl.Series 2 \textbf{232} (2014),  99--118

\bibitem{F5}
\newblock A.V.Fursikov,
\newblock{Stabilization of the simplest normal parabolic equation by starting control.}
\newblock \emph{Communications on
 pure and applied analysis}, \textbf{13} (2014),  1815--1854

 \bibitem{F6}
 \newblock A.V.Fursikov,
 \newblock{Stabilization for the 3D Navier-Stokes system by
feedback boundary control},
\newblock \emph{Discrete and Cont. Dyn. Syst.}, \textbf{10} (2004), 289--314.

\bibitem{FG}
\newblock A.V.Fursikov, A.V.Gorshkov,
\newblock{Certain questions of feedback stabilization for Navier-Stokes
equations.}
\newblock \emph{Evolution equations and control theory (EECT)}, \textbf{1} (2012), 109--140


\bibitem{FI}
\newblock A.V.Fursikov, O.Yu Immanuvilov,
\newblock {Exact controllability of Navier-Stokes and Boussinesq equations.}
\newblock \emph{Russian Math. Survveys}, \textbf{54} (1999), 565--618

\bibitem{FK}
\newblock A.V.Fursikov, A.A.Kornev,
\newblock{Feedback stabilization for Navier-Stokes equations:
theory and calculations.}
\newblock \emph{Mathematical Aspects of Fluid
Mechanics (LMS Lecture Notes Series)}, \textbf{402}, Cambridge University
Press, (2012),  130--172



\bibitem{FSh}
\newblock A.V.Fursikov, L.S.Shatina,
\newblock{On an estimate related to the stabilization on a normal
parabolic equation by starting control},
\newblock \emph{Journal of Mathematical Sciences},
 \textbf{217}, № 6, (2016), 803--826

\bibitem{FSh16}
\newblock A.V.Fursikov, L.S.Shatina,
\newblock{Nonlocal stabilization of the normal equation connected with Helmholtz system by starting control},
\newblock\emph{Discrete and Continuous Dynamical Systems (DCDS-A)}, \textbf{38}, no.3 (2018), 1187--1242.

\bibitem{F7}
\newblock A.V.Fursikov,
\newblock{Normal equation generated from Helmholtz system: nonlocal stabilization by starting
control and properties of stabilized solutions},
\newblock accepted for publication in  \emph{Recent developments in Integrable Systems and related topics of Mathematical Physics (eds. V. M. Buchstaber et al.)}, PROMS, Springer (2018)

%G. Eskin, Lectures on Linear Partial Differential Equations, Amer. Math. Society,
%Providence RI, 2011
\bibitem{Es}
\newblock G. Eskin,
\newblock \emph{Lectures on Linear Partial Differential Equations,}
\newblock Amer. Math. Society, Providence RI, (2011), 410p.

\bibitem{K}
\newblock M.Krstic,
\newblock{On global stabilizationof Burgers' equation by boundary control},
\newblock \emph{Systems of control letters}, \textbf{37} (1999), 123--141.


\bibitem{R1}
\newblock J.-P.Raymond,
\newblock{Feedback boundary stabilization of the three-dimensional incompressible
Navier-Stokes equations},
\newblock \emph{J. Math. Pures Appl.}, \textbf{ 87} (2007), 627--669.

\bibitem{RTh}
\newblock J.-P.Raymond, L.Thevenet,
\newblock{Boundary feedback stabilization of the two-dimensional
Navier-Stokes equations with final dimensional controllers},
\newblock \emph{Discrete and Continuous Dynamical Systems (DCDS-A)}, \textbf{ 27}, no.3 (2010), 1159--1187.

\end{thebibliography}
\end{document}